\documentclass[a4paper,12pt]{article}

\usepackage{mathtools}
\usepackage{amsfonts}
\usepackage{amsthm}
\usepackage{graphicx}
\usepackage{framed}
\usepackage{float}
\usepackage{upgreek}
\usepackage{enumerate}
\usepackage{MnSymbol}
\usepackage{amsbsy}
\usepackage{url}
\usepackage{fullpage}

\usepackage[citestyle=alphabetic,bibstyle=alphabetic]{biblatex}
\renewbibmacro{in:}{}
\DefineBibliographyStrings{english}{%
urlseen = {version of},
}
\AtEveryBibitem{
 \clearfield{eprint}
 \clearfield{isbn}
 \clearfield{issn}
 \clearfield{doi}
 \clearfield{language}
 \clearfield{editor}
}
\bibliography{rdsrefs}


\usepackage{bbm}
\usepackage{relsize}
\usepackage{hyperref}
\theoremstyle{plain}
\newtheorem{thm}{Theorem}[section]
\newtheorem{lemma}[thm]{Lemma}
\newtheorem{cor}[thm]{Corollary}

\newtheorem*{theo}{Theorem}

\newtheorem*{sa}{Standing Assumption}
\newtheorem{prop}[thm]{Proposition}

\theoremstyle{definition}
\newtheorem{defi}[thm]{Definition}
\newtheorem{rmk}[thm]{Remark}

\theoremstyle{remark}

\newcommand{\col}{\ensuremath{\hspace{0.3mm}\colon}}
\hyphenpenalty=100000
\tolerance=3000

\begin{document}

\title{Synchronisation of almost all trajectories of a random dynamical system}
\author{Julian Newman\footnote{Department of Mathematics, Imperial College London, South Kensington, London, UK, SW7 2AZ. The author wishes gratefully to acknowledge funding from an EPSRC Doctoral Training Account and an EPSRC Doctoral Prize.}}
\maketitle

\begin{abstract}
\noindent It has been shown by Le~Jan that, given a memoryless-noise random dynamical system together with an ergodic distribution for the associated Markov transition probabilities, if the support of the ergodic distribution admits locally asymptotically stable trajectories, then there is a random attracting set consisting of finitely many points, whose basin of forward-time attraction includes a random full measure open set. In this paper, we present necessary and sufficient conditions for this attracting set to be a singleton; our result does not require the state space to be compact, but holds on general Lusin metric spaces.
\end{abstract}

\section{Introduction}

We consider a random dynamical system (RDS) $\varphi$ on a metric space $X$ (which is taken to be a Borel subset of a separable complete metric space), driven by memoryless stationary noise, in either discrete or continuous time. (For a rigorous formulation of this, see Section~\ref{formal}.) Since the noise is stationary and memoryless, the trajectories of $\varphi$ are homogeneous Markov processes. Given an ergodic distribution $\rho$ for the associated Markov transition probabilities, we say that $\varphi$ is \emph{stable with respect to $\rho$} if there is a positive-measure set of noise realisations under which some trajectories in the support of $\rho$ are locally asymptotically stable.\footnote{It follows from this that for almost every realisation of the noise, the trajectory of $\rho$-almost every initial condition in $X$ is asymptotically stable.} When $X$ is a manifold, this property is typically implied by negativity of the Lyapunov spectrum.
\\ \\
It has been shown in \cite{LeJan87} that if $\varphi$ is stable with respect to $\rho$ then there exists $n \in \mathbb{N}$ with the property that for almost every noise realisation, $\rho$-almost all of the state space $X$ can be partitioned into $n$ open (noise-dependent) regions of equal $\rho$-measure, such that trajectories starting in the same region synchronise as time tends to $\infty$ but trajectories starting in different regions do not synchronise as time tends to $\infty$.
\\ \\
In this paper, we will give necessary and sufficient conditions for the number of regions $n$ to be 1; this situation is precisely the situation that for almost every noise realisation, the trajectories of $\rho$-almost all initial conditions in $X$ are locally asymptotically stable and synchronise with each other. We describe this scenario by saying that \emph{$\varphi$ is $\rho$-almost everywhere stably synchronising}.
\\ \\
Let us now describe our result in more detail. It is straightforward to show that if $\varphi$ is $\rho$-almost everywhere stably synchronising then there is a $\rho$-full set $A \subset X$ such that, given any initial conditions $x,y \in A$ and any open $U \subset X$ with $\rho(U)>0$, for almost every noise realisation the trajectories of $x$ and $y$ will (at some point in time) simultaneously be in $U$. Assuming that $\varphi$ is stable with respect to $\rho$, we will show that the converse also holds (that is to say, the existence of such a $\rho$-full set $A \subset X$ implies that $\varphi$ is $\rho$-almost everywhere stably synchronising). Of course, verifying the existence of such a set $A$ is difficult; but we will show that it is sufficient to verify much less than this---namely:

\begin{theo}
Assume $\varphi$ is stable with respect to $\rho$. Suppose there exist a $\rho$-positive measure set $A_1 \subset X$ and a $\rho$-full set $A_2 \subset X$ such that for all $(x,y) \in A_1 \times A_2$ there is a $\rho$-transitive point $p(x,y)$ so that, given any neighbourhood $U \subset X$ of $p(x,y)$, with strictly positive probability the trajectories of $x$ and $y$ will (at some point in time) simultaneously be in $U$. Then $\varphi$ is $\rho$-almost everywhere stably synchronising.
\end{theo}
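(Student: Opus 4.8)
The plan is to leverage the already-stated converse principle: it suffices to produce a $\rho$-full set $A\subset X$ such that for every $x,y\in A$ and every open $U$ with $\rho(U)>0$, with positive probability (hence, by the Markov property together with a standard zero–one argument, almost surely) the trajectories of $x$ and $y$ are simultaneously in $U$ at some time. So the whole task reduces to bootstrapping the hypothesis — which only gives us a positive-measure set $A_1$, a full-measure set $A_2$, and a \emph{single} $\rho$-transitive target point $p(x,y)$ depending on the pair — up to this much stronger statement about \emph{all} pairs in a full set and \emph{all} $\rho$-positive open sets $U$.

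First I would exploit $\rho$-transitivity of $p(x,y)$: by definition the forward orbit (under the Markov process, almost surely, or under the skew product) of $p(x,y)$ enters every $\rho$-positive open set $U$ with positive probability. Chaining this with the hypothesis via the Markov property: if $x,y$ can be simultaneously near $p(x,y)$ with positive probability, and from near $p(x,y)$ one can reach any prescribed $\rho$-positive open $U$ with positive probability, then — using joint continuity of $\varphi$ in the initial condition and a coupling that feeds the \emph{same} noise realisation into both trajectories after the meeting time — $x$ and $y$ can be simultaneously in $U$ with positive probability. The point is that once the two trajectories are in a common small neighbourhood of $p(x,y)$, applying the same noise to both keeps them in a common (slightly larger) neighbourhood for a bounded number of steps, long enough to steer both into $U$ together. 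This upgrades the conclusion from "simultaneously near $p(x,y)$" to "simultaneously in any $\rho$-positive open $U$", for all $(x,y)\in A_1\times A_2$.

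Next I would remove the asymmetry between $A_1$ and $A_2$ and enlarge $A_1$ to a full set. The idea is a two-step relay: given $x,y$ both in a suitable full set $A$, first note that because $A_1$ has positive measure and the transition probabilities are $\rho$-ergodic, the trajectory of $x$ (and of $y$) almost surely visits — indeed simultaneously, by the same coupling-after-meeting trick applied in reverse — a common small neighbourhood of some point of $A_1$; more precisely, I would use stability with respect to $\rho$ to find, for $\rho$-a.e.\ $x$, a positive-probability event on which the trajectory of $x$ comes locally asymptotically stably close to a trajectory emanating from a point of $A_1$. Composing: $x$ and $y$ first synchronise their behaviour with that of points $x',y'\in A_1$, then invoke the previous paragraph's conclusion for the pair $(x',y')\in A_1\times A_2$ to bring everyone simultaneously into $U$. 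Taking $A$ to be the intersection of $A_2$ with the relevant $\rho$-full sets coming from stability and from ergodicity yields the required $A$, and the stated converse then gives $\rho$-almost everywhere stable synchronisation.

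The main obstacle I anticipate is the coupling/continuity argument in the second paragraph: making rigorous the claim that "simultaneously near $p$" can be propagated to "simultaneously in $U$" requires controlling how trajectories started at nearby points but with the \emph{same} future noise stay close over a fixed finite time horizon, and then splicing that deterministic-in-noise closeness with the positive-probability reachability of $U$ from $p$ — all while keeping the events on which these happen of positive probability and measurable in the appropriate product space. The non-compactness of $X$ means I cannot appeal to uniform equicontinuity, so the neighbourhoods must be chosen pointwise and the finite time horizon fixed before shrinking them; handling the bookkeeping of "which neighbourhood, which horizon, which noise event" cleanly — probably via a countable exhaustion of the $\rho$-positive open sets and a diagonal choice of the full set $A$ — is where the real care is needed. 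A secondary subtlety is ensuring the transitive point $p(x,y)$, which a priori depends measurably (or merely pointwise) on $(x,y)$, can be handled without measurable-selection difficulties; I expect this is circumvented because the final conclusion quantifies over all $U$ and all pairs, so one never needs to track $p(x,y)$ as a function — only its existence for each fixed pair.
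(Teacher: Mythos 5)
Your second paragraph (getting the pair simultaneously into an arbitrary $\rho$-positive open set $U$ by chaining the hypothesis with the $\rho$-transitivity of $p(x,y)$, the continuity of $\varphi(t_1,\omega)$ and the independence of the noise increments) is sound, and is in fact a portion of the paper's own argument. The gap lies in the rest of the plan. First, the ``already-stated converse principle'' you wish to reduce to --- that the existence of a $\rho$-full set $A$ with almost-sure simultaneous entry into every $\rho$-positive open set implies $\rho$-almost-everywhere stable synchronisation --- is not an available fact: it is the implication (iv)$\Rightarrow$(iii) of Theorem~\ref{main}, announced in the introduction as something \emph{to be proved}, and the paper proves it by routing (iv)$\Rightarrow$(i)$\Rightarrow$(ii)$\Rightarrow$(iii), i.e.\ through precisely the implication you are asked to establish; so your reduction is circular. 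Second, the bootstrapping in your third paragraph does not go through: to ``synchronise $x$ with some $x' \in A_1$'' you would need the trajectories of $x$ and $x'$ to enter a common contracting neighbourhood \emph{simultaneously}, which is exactly the kind of meeting statement the theorem is about; stability with respect to $\rho$ only gives local asymptotic stability of individual trajectories, not that the trajectory of a typical $x$ approaches the trajectory of some point of $A_1$. Likewise, the proposed upgrade from positive probability to almost-sure by ``a standard zero--one argument'' is unjustified: the event that the two trajectories are simultaneously in $U$ at some time is not a tail event, and when the random attractor consists of $n_\rho \geq 2$ points such events genuinely have probability strictly between $0$ and $1$ --- ruling this out is the content of the theorem, not an input to it.

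What is missing is the measure-theoretic mechanism the paper actually uses: the unique past-measurable ergodic invariant measure $\mu$ with disintegration $(\mu_\omega)$, which by stability is $n$-uniform for some $n \in \mathbb{N}$; the associated two-point stationary measure $\bar{\mu}^{(2)}$; and the Poincar\'{e} recurrence argument showing that the set of off-diagonal pairs $(x,y)$ with $\mathbb{P}(\omega : x \sim_\omega y) > 0$ is $\bar{\mu}^{(2)}$-null. Your chaining step, extended by one further step into an open set $V$ that contracts with positive probability (which stability provides), shows that every pair in $A_1 \times A_2$ satisfies $\mathbb{P}(\omega : x \sim_\omega y) > 0$; combined with $\bar{\mu}^{(2)}(A_1 \times A_2) = \rho(A_1)$ and $\bar{\mu}^{(2)}((A_1 \times A_2) \cap \Delta_X) = \tfrac{1}{n}\rho(A_1)$, this forces $n = 1$, which is the desired conclusion. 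Without an argument of this counting type (or some substitute for it), the passage from ``pairs in $A_1 \times A_2$ meet with positive probability'' to ``$\rho$-almost all pairs synchronise almost surely'' cannot be made.
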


\noindent Here, a \emph{$\rho$-transitive point} is an initial condition in the support of $\rho$ from which every $\rho$-positive measure open set is accessible. Since $\rho$ is ergodic, $\rho$-almost every initial condition is $\rho$-transitive.
\\ \\
The proof of our result is based on a generalisation of a method in \cite{Hom13}.
\\ \\
To illustrate our result, we will show that the ``double-well potential with additive noise'' as considered in \cite{FGS14} exhibits almost sure forward-time synchronisation of the trajectories of any given pair of initial conditions.\footnote{The results in \cite{FGS14} yield that the noisy double-well potential exhibits global synchronisation in a \emph{pullback} sense. (Nonetheless, combining this with forward-time local asymptotic stability does provide an alternative way to obtain almost sure forward-time synchronisation of the trajectories of any two given initial conditions.)}
\\ \\
Let us now give a brief introduction to synchronisation of trajectories in random dynamical systems, and an overview of existing results on the topic.
\\ \\
Synchronisation of trajectories is manifested physically in the important phenomenon of ``noise-induced synchronisation'', where two or more non-interacting processes starting at different initial states are caused to synchronise with each other due to simultaneous exposure to the same source of external random perturbations. This phenomenon was reported by Pikovskii in the early 1980s (\cite{Pik84}), and since then, there have been numerous case studies of noise-induced synchronisation (analytical, numerical and experimental); see e.g.~\cite{TMHP01} and references therein. The theory of random dynamical systems is central in the mathematical study of noise-induced synchronisation, since the evolutions of the processes affected by the random perturbations can typically be regarded as simultaneous trajectories of one RDS under the same noise realisation.
\\ \\
In analytical studies of synchronisation of trajectories in RDS, a key concept that is often considered is \emph{Lyapunov exponents}. Lyapunov exponents are primarily suited to the context of spatially smooth RDS on Euclidean space or on a manifold, and they measure ``infinitesimal-scale repulsion of trajectories''. When the \emph{maximal Lyapunov exponent} associated to a trajectory exists and is negative, it typically follows that the trajectory is locally asymptotically stable. Given an ergodic distribution $\rho$ for the Markov transition probabilities of the RDS, provided some weak conditions are met, there will exist a value $\lambda_\rho \in \mathbb{R} \cup \{-\infty\}$ such that for almost every noise-realisation, for $\rho$-almost every initial condition the maximal Lyapunov exponent associated to the corresponding trajectory exists and is equal to $\lambda_\rho$. (See e.g.~the start of Section~2 of \cite{LeJan87}.) We refer to $\lambda_\rho$ as the \emph{maximal Lyapunov exponent associated to $\rho$}.
\\ \\
As we have said, negativity of the maximal Lyapunov exponent typically implies \emph{local} asymptotic stability of trajectories; the natural question is then to find conditions under which we can deduce some ``larger-scale'' synchronisation of trajectories. We will now mention some existing results pertaining to this question.
\\ \\
In \cite{newman}, necessary and sufficient conditions are found for a memoryless-noise RDS on a \emph{compact} space to exhibit almost sure synchronisation of the trajectories of any given pair of initial conditions, together with almost sure local asymptotic stability of the trajectory of any given initial condition. One of the key differences between the result of \cite{newman} and the result of our present paper is that, in our present paper, compactness of the state space is not needed. However, it is worth saying that when the state space \emph{is} compact, the necessary and sufficient conditions for ``stable synchronisation'' given in \cite{newman} also serve as \emph{sufficient} conditions for $\rho$-almost-everywhere stable synchronisation; and when these conditions are satisfied, they are likely to be easier to verify than the necessary and sufficient conditions given in this present paper for $\rho$-almost-everywhere stable synchronisation.
\\ \\
In \cite{Hom13}, discrete-time diffeomorphic RDS on a compact manifold are considered. Theorems~1.1\footnote{In the statement of \cite[Theorem~1.1]{Hom13}, it seems that the required additional assumption that $m$ is the only stationary probability measure is missing.} and 1.2 of \cite{Hom13} provide sufficient conditions for almost sure synchronisation of the trajectories of any given pair of initial conditions, in either the whole manifold or a suitable open subset thereof. Theorems~1.1 and 1.2 of \cite{Hom13} can in fact be derived as particular cases of the main result in \cite{newman}. Nonetheless, the basic idea of the proof of \cite[Theorem~1.1]{Hom13} can be generalised well beyond the context of a diffeomorphic RDS on a compact manifold. Specifically, the basic idea of the proof is that, given any set $S$ of initial conditions, if the subsequent trajectories are able to simultaneously reach an arbitrarily small neighbourhood of some point $p$, and if the trajectory starting at $p$ is itself able to reach an open region $U$ within which it is possible for all trajectories to synchronise, then it is possible that the trajectories of all the initial conditions in $S$ will eventually enter $U$ and then synchronise. It is precisely by combining this idea with \cite[Proposition~2]{LeJan87} that the main result of our present paper has been obtained.
\\ \\
In \cite{FGS14}, sufficient conditions are given for a RDS on a separable complete metric space to exhibit mutual ``synchronisation in probability'' of all trajectories.\footnote{More specifically, the phenomena considered in \cite{FGS14} are the existence of a weak global point-attractor and the existence of a weak global attractor.} As an application, large classes of ordinary differential equations in Euclidean space are shown to exhibit such synchronisation when Gaussian white noise is added to the right-hand side.
\\ \\
In \cite{Bax91}, Wiener-driven stochastic differential equations on a compact manifold are considered. Certain non-degeneracy conditions on the vector fields are assumed, implying in particular that there is a unique ergodic distribution $\rho$ for the Markov transition probabilities, and that $\rho$ is equivalent to the Riemannian measure (under any Riemannian metric). One of the results proved (Theorem~4.10) is that if the maximal Lyapunov exponent $\lambda_\rho$ associated to $\rho$ is negative and any two distinct trajectories are always able to come closer together, then the system exhibits almost sure synchronisation of the trajectories of any given pair of initial conditions. This result is, in fact, a special case of the main result of \cite{newman}. However, remarkably, if we replace the condition that $\lambda_\rho$ is negative with the condition that $\lambda_\rho=0$, a further result of \cite{Bax91} (Corollary~5.12) gives that the system will still exhibit a kind of global-scale synchronisation (where the notion of synchronisation involved is based on convergence in probability).
\\ \\
Now there also exist several results to the effect that if a RDS has some order-preserving or orientation-preserving property, then under some weak conditions synchronisation is guaranteed: see e.g.~\cite{CF98} for order-preserving RDS on $\mathbb{R}$, \cite{FGS15} for order-preserving RDS on more general partially ordered spaces, and \cite{newman2} and \cite{Kai93} for orientation-preserving RDS on a circle.
\\ \\
The structure of the paper will be as follows: In Section~2, we will present the formal setup, introduce some key definitions and results, and then state our main result (Theorem~\ref{main}). We will also present the double-well potential example. In Section~3, we give the proof of our main result, first introducing some preliminary theory of RDS as necessary.

\section{The basic setup and our result} \label{formalism}

\subsection{The setup: RDS with memoryless noise} \label{formal}

A ``random dynamical system with memoryless noise'' consists of two components: a ``memoryless'' filtered measure-preserving flow, representing the ``noise''; and an adapted cocycle over this flow acting on the state space.
\\ \\
Let $\mathbb{T}$ be either $\mathbb{Z}$ or $\mathbb{R}$, and let $\mathbb{T}^+:=\mathbb{T} \cap [0,\infty)$. Let $\bar{\mathbb{T}}:=\mathbb{T} \cup \{-\infty,\infty\}$, and let $\bar{\mathbb{T}}^+:=\mathbb{T}^+\cup\{\infty\}$. Let $(\Omega,\mathcal{F})$ be a measurable space, and let $(\mathcal{F}_s^{s+t})_{s \in \mathbb{T} \! , \, t \in \mathbb{T}^+}$ be a family of sub-$\sigma$-algebras of $\mathcal{F}$ such that
\begin{enumerate}[\indent (i)]
\item $\mathcal{F}_{t_1}^{t_2} \,\subset\, \mathcal{F}_{t_0}^{t_3}\,$ for all $\,t_0 \leq t_1 \leq t_2 \leq t_3\,$ in $\mathbb{T}$;
\item $\sigma(\mathcal{F}_s^{s+t} : s \in \mathbb{T},t \in \mathbb{T}^+)=\mathcal{F}$.
\end{enumerate}
\noindent We will use the following notations:
\begin{align*}
\mathcal{F}_s^\infty \ &:= \ \sigma(\mathcal{F}_s^{s+t} : t \in \mathbb{T}^+) \hspace{3mm} \textrm{for any $s \in \mathbb{T}$} \\
\mathcal{F}_\infty^\infty \ &:= \ \bigcap_{s \in \mathbb{T}} \mathcal{F}_s^\infty \\
\mathcal{F}_{-\infty}^t \ &:= \ \sigma(\mathcal{F}_{t-s}^t : s \in \mathbb{T}^+) \hspace{3mm} \textrm{for any $t \in \mathbb{T}$} \\
\mathcal{F}_{-\infty}^{-\infty} \ &:= \ \bigcap_{t \in \mathbb{T}} \mathcal{F}_{-\infty}^t
\end{align*}
\noindent It will also be useful to have the convention that $\mathcal{F}_{-\infty}^\infty:=\mathcal{F}$. Let $(\theta^t)_{t \in \mathbb{T}}$ be a group of $(\mathcal{F},\mathcal{F})$-measurable functions $\theta^t\col\Omega \to \Omega$ such that $\theta^\tau\mathcal{F}_s^t=\mathcal{F}_{s-\tau}^{t-\tau}$ for all $s,t,\tau \in \mathbb{T}$ with $s \leq t$. Let $\mathbb{P}$ be a probability measure on $(\Omega,\mathcal{F})$ with the following properties:
\begin{enumerate}[\indent (i)]
\item $\theta^t_\ast\mathbb{P}=\mathbb{P}$ for all $t \in \mathbb{T}$;
\item for each $t \in \mathbb{T}$, $\mathcal{F}_{-\infty}^t$ and $\mathcal{F}_t^\infty$ are independent $\sigma$-algebras under $\mathbb{P}$.
\end{enumerate}

\noindent Property~(i) represents stationarity of the noise, and property~(ii) represents memorylessness of the noise. As in \cite[Lemma~5.1]{newman2}, for every $t \in \mathbb{T} \setminus \{0\}$, $\mathbb{P}$ is ergodic with respect to $\theta^t$.
\\ \\
Let $(X,d)$ be a separable metric space such that $X$ is a Borel subset of the $d$-completion of $X$.\footnote{This guarantees that $X$ is measurably isomorphic to either an at-most-countable discrete space, or an interval with its Borel $\sigma$-algebra (\cite[Theorem~3.3.13]{MR1619545}).} For any $x \in X$ and $\delta>0$, we write $B_\delta(x):=\{y \in X : d(x,y)<\delta\}$. For any $A \subset X$, we write $\Delta_A:=\{(x,x) : x \in A\} \subset X \times X$.
\\ \\
Let $\,\varphi = \left(\varphi(t,\omega)\right)_{t \in \mathbb{T}^+ \! , \, \omega \in \Omega}\,$ be a $(\mathbb{T}^+ \! \times \Omega)$-indexed family of continuous functions $\varphi(t,\omega):X \to X$ such that
\begin{enumerate}[\indent (a)]
\item the map $\omega \mapsto \varphi(t,\omega)x$ is $(\mathcal{F}_t,\mathcal{B}(X))$-measurable for each $t \in \mathbb{T}^+$ and $x \in X$;
\item for every $\omega \in \Omega$, $\varphi(0,\omega)$ is the identity function on $X$;
\item $\varphi(s+t,\omega) \, = \, \varphi(t,\theta^s\omega) \circ \varphi(s,\omega)\,$ for all $s,t \in \mathbb{T}^+$ and $\omega \in \Omega$;
\item for any decreasing sequence $(t_n)$ in $\mathbb{T}^+$ converging to a value $t$, and any sequence $(x_n)$ in $X$ converging to a point $x$, $\,\varphi(t_n,\omega)x_n \to \varphi(t,\omega)x\,$ as $n \to \infty$ for all $\omega \in \Omega$.
\end{enumerate}

\noindent (Property~(d) constitutes ``right-continuity'' of $\varphi$.\footnote{As shown in the Appendix of \cite{newman}, assuming an additional ``left limits'' property guarantees that local asymptotic stability (in the sense that we shall use the term) implies stability in the sense of Lyapunov.})
\\ \\
We refer to $\varphi$ as a \emph{random dynamical system on the state space $X$ over the noise space $(\Omega,\mathcal{F},(\mathcal{F}_s^{s+t})_{s \in \mathbb{T} \! , \, t \in \mathbb{T}^+},\mathbb{P},(\theta^t)_{t \in \mathbb{T}})$.}
\\ \\
Now it is easy to show that for any $x \in X$, the stochastic process $\left( \varphi(t,\cdot)x \right)_{t \in \mathbb{T}^+}$ is a homogeneous Markov process (with respect to the filtration $(\mathcal{F}_0^t)_{t \in \mathbb{T}^+}$), with the associated family of transition probabilities $(\varphi_x^t)_{x \in X \! , \, t \in \mathbb{T}^+}$ being given by
\begin{align*}
\varphi_x^t(A) \ :=& \ \mathbb{P}(\omega : \varphi(t,\omega)x \in A) \\
=& \ \mathbb{P}(\omega : \varphi(t,\theta^s\omega)x \in A) \hspace{4mm} \textrm{(for any $s \in \mathbb{T}$)}
\end{align*}
\noindent for all $A \in \mathcal{B}(X)$. Note that a probability measure $\rho$ on $X$ is a stationary probability measure of the Markov transition probabilities $(\varphi_x^t)_{x \in X \! , \, t \in \mathbb{T}^+}$ if and only if for all $A \in \mathcal{B}(X)$ and $t \in \mathbb{T}^+$,
\[ \rho(A) \ = \ \int_\Omega \rho(\varphi(t,\omega)^{-1}(A)) \, \mathbb{P}(d\omega). \]
\noindent For any $t \in \mathbb{T}^+$ we define the map $\Theta^t:\Omega \times X \to \Omega \times X$ by 
\[ \Theta^t(\omega,x) \ = \ (\theta^t\omega,\varphi(t,\omega)x). \]
\noindent Note that $(\Theta^t)_{t \in \mathbb{T}^+}$ forms a semigroup of measurable transformations of the measurable space $(\Omega \times X, \mathcal{F} \otimes \mathcal{B}(X))$, and also of the ``restricted'' measurable space $(\Omega \times X, \mathcal{F}_{-r}^\infty \otimes \mathcal{B}(X))$ for any $r \in \mathbb{T}^+$. For any Borel probability measure $\rho$ on $X$, the following hold:
\begin{itemize}
\item $\rho$ is a stationary measure of the Markov transition probabilities $(\varphi_x^t)_{x \in X \! , \, t \in \mathbb{T}^+}$ if and only if $(\Theta^t)_{t \in \mathbb{T}^+}$ is a measure-preserving semigroup of the probability space $(\Omega \times X, \mathcal{F}_0^\infty \otimes \mathcal{B}(X),\mathbb{P}|_{\mathcal{F}_0^\infty} \otimes \rho)$;
\item $\rho$ is an ergodic measure of the Markov transition probabilities $(\varphi_x^t)_{x \in X \! , \, t \in \mathbb{T}^+}$ if and only if $(\Theta^t)_{t \in \mathbb{T}^+}$ is an ergodic measure-preserving semigroup of the probability space $(\Omega \times X, \mathcal{F}_0^\infty \otimes \mathcal{B}(X),\mathbb{P}|_{\mathcal{F}_0^\infty} \otimes \rho)$.
\end{itemize}

\noindent (For a proof, see e.g.~\cite[Theorem~143]{New15} or \cite[Lemma~I.2.3 and Theorem~I.2.1]{Kif86}.)

\subsection{Stability of trajectories and our main result}

We now introduce the notion of asymptotic stability; we then give (a generalised version of) an important result in \cite{LeJan87}, and from there, state our main result.
\\ \\
Given a sample point $\omega \in \Omega$ and a set $A \subset X$, we say that $A$ \emph{contracts under $\omega$} if $\mathrm{diam}(\varphi(t,\omega)A) \to 0$ as $t \to \infty$. Given a sample point $\omega \in \Omega$ and a point $x \in X$, we say that \emph{$x$ is asymptotically stable under $\omega$} if there exists a neighbourhood $U$ of $x$ such that $U$ contracts under $\omega$. We say that a set $A \subset X$ \emph{admits stable trajectories} if
\[ \mathbb{P}( \omega \, : \, \exists \,\textrm{open $U$ with $U \cap A \neq \emptyset$ s.t.~$U$ contracts under $\omega$} ) \ > \ 0, \]
\noindent which is the same as saying that
\[ \mathbb{P}( \omega \, : \, \exists \,\textrm{$x \in A$ s.t.~$x$ is asymptotically stable under $\omega$} ) \ > \ 0. \]
\noindent Now let
\[ O \ := \ \{ (\omega,x) \in \Omega \times X : \textrm{$x$ is asymptotically stable under $\omega$} \}. \]
\noindent As in \cite[Lemma~2.2.3]{newman}, $O$ is an $(\mathcal{F}_0^\infty \otimes \mathcal{B}(X))$-measurable set, and is backward-invariant under the semigroup $(\Theta^t)_{t \in \mathbb{T}^+}$.

\begin{lemma} \label{stable traj}
Let $\rho$ be an ergodic probability measure of the Markov transition probabilities $(\varphi_x^t)$. The following statements are equivalent:
\begin{enumerate}[\indent (i)]
\item $O$ is a $(\mathbb{P} \otimes \rho)$-full measure set;
\item $O$ is a $(\mathbb{P} \otimes \rho)$-positive measure set;
\item $\mathrm{supp}\,\rho$ admits stable trajectories.
\end{enumerate}
\end{lemma}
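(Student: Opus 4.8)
The plan is to prove the two equivalences $\mathrm{(i)}\Leftrightarrow\mathrm{(ii)}$ and $\mathrm{(ii)}\Leftrightarrow\mathrm{(iii)}$; throughout, $\mathbb{P}\otimes\rho$ abbreviates $\mathbb{P}|_{\mathcal{F}_0^\infty}\otimes\rho$. The implication $\mathrm{(i)}\Rightarrow\mathrm{(ii)}$ is trivial. For $\mathrm{(ii)}\Leftrightarrow\mathrm{(iii)}$ I would work with the sections $O_\omega:=\{x\in X:(\omega,x)\in O\}$ and use two elementary observations. First, each $O_\omega$ is open: if $x\in O_\omega$ then some open neighbourhood $U$ of $x$ contracts under $\omega$, and $U$ is then a contracting neighbourhood of every one of its points, so $U\subseteq O_\omega$. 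Second, for any open $V\subseteq X$ one has $V\cap\mathrm{supp}\,\rho\neq\emptyset$ if and only if $\rho(V)>0$ (a standard property of the support of a Borel measure on a metric space). Combining these, for a given $\omega$ the statement ``there is an open $U$ with $U\cap\mathrm{supp}\,\rho\neq\emptyset$ contracting under $\omega$'' is equivalent to ``$O_\omega\cap\mathrm{supp}\,\rho\neq\emptyset$'', hence to ``$\rho(O_\omega)>0$''; in particular the event defining $\mathrm{(iii)}$ is exactly $\{\omega:\rho(O_\omega)>0\}$, which is measurable since $\omega\mapsto\rho(O_\omega)$ is measurable by the Fubini--Tonelli theorem. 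Therefore $\mathrm{(iii)}$ holds if and only if $\mathbb{P}(\omega:\rho(O_\omega)>0)>0$, equivalently $\int_\Omega\rho(O_\omega)\,\mathbb{P}(d\omega)=\mathbb{P}\otimes\rho(O)>0$, which is exactly $\mathrm{(ii)}$.

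For $\mathrm{(ii)}\Rightarrow\mathrm{(i)}$ I would combine the two structural facts quoted just before the statement --- $O$ is $(\mathcal{F}_0^\infty\otimes\mathcal{B}(X))$-measurable and backward-invariant under $(\Theta^t)_{t\in\mathbb{T}^+}$ --- with the fact that ergodicity of $\rho$ means precisely that $(\Theta^t)_{t\in\mathbb{T}^+}$ is an ergodic measure-preserving semigroup of $(\Omega\times X,\mathcal{F}_0^\infty\otimes\mathcal{B}(X),\mathbb{P}\otimes\rho)$. Since each $\Theta^t$ preserves $\mathbb{P}\otimes\rho$ and $(\Theta^t)^{-1}(O)\subseteq O$, each difference $O\setminus(\Theta^t)^{-1}(O)$ is $(\mathbb{P}\otimes\rho)$-null. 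I would then set $A:=\bigcap_{t\in\mathbb{T}^+\cap\mathbb{Q}}(\Theta^t)^{-1}(O)$: this is measurable as a countable intersection, it equals $O$ modulo $(\mathbb{P}\otimes\rho)$-null sets by the previous sentence, and --- using backward-invariance of $O$ --- it is genuinely invariant under every $\Theta^t$, $t\in\mathbb{T}^+$ (the point being that if $\Theta^s(\omega,x)\in O$ then, factoring $\Theta^s=\Theta^{s-v}\circ\Theta^v$ for $0\le v\le s$ and invoking backward-invariance at time $s-v$, one gets $\Theta^v(\omega,x)\in O$; this both absorbs the non-rational times and upgrades the trivial inclusion $(\Theta^r)^{-1}A\supseteq A$ to an equality). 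Ergodicity then forces $\mathbb{P}\otimes\rho(A)\in\{0,1\}$, so $\mathbb{P}\otimes\rho(O)\in\{0,1\}$, and $\mathrm{(ii)}$ selects the value $1$.

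I expect the bookkeeping in $\mathrm{(ii)}\Leftrightarrow\mathrm{(iii)}$ to be entirely routine once openness of the sections $O_\omega$ and the support characterisation are recorded. The one step deserving genuine care is the zero--one law in $\mathrm{(ii)}\Rightarrow\mathrm{(i)}$: one must pass from ``$O$ is almost-invariant under each individual $\Theta^t$'' to ``$O$ agrees almost everywhere with a set invariant under the whole semigroup'', and in continuous time this is exactly where backward-invariance of $O$ (rather than mere measurability) is needed. Beyond that, nothing but Fubini--Tonelli, the definition of the support of a measure, and the recalled properties of $O$ and of ergodic stationary measures enters the argument.
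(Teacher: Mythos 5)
Your proposal is correct and follows essentially the same route as the paper: Fubini--Tonelli applied to the sections of $O$ for the equivalence of (ii) and (iii), and the backward-invariance of $O$ together with ergodicity of $(\Theta^t)_{t\in\mathbb{T}^+}$ on $(\Omega\times X,\mathcal{F}_0^\infty\otimes\mathcal{B}(X),\mathbb{P}|_{\mathcal{F}_0^\infty}\otimes\rho)$ for (ii)$\Rightarrow$(i). The only difference is one of detail: you spell out the zero--one-law step via the strictly invariant set $\bigcap_{t\in\mathbb{T}^+\cap\mathbb{Q}}(\Theta^t)^{-1}(O)$, which the paper's one-line argument leaves implicit.
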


\begin{proof}
The equivalence of (i) and (ii) follows from the backward-invariance of $O$ and the fact that $\mathbb{P}|_{\mathcal{F}_0^\infty} \otimes \rho$ is $(\Theta^t)$-ergodic. It is clear that (ii)$\Rightarrow$(iii). Now suppose that (iii) holds; so we have a $\mathbb{P}$-positive measure set of sample points $\omega$ with the property that there exists a $\rho$-positive measure open set $U$ such that $U$ contracts under $\omega$. Fubini's theorem then yields that $\mathbb{P} \otimes \rho(O)>0$, i.e.~(ii) holds.
\end{proof}

\begin{defi}
Let $\rho$ be an ergodic probability measure of $(\varphi_x^t)$. We say that \emph{$\varphi$ is stable with respect to $\rho$} if the equivalent statements in Lemma~\ref{stable traj} hold.
\end{defi}

\noindent Now given a sample point $\omega \in \Omega$ and an open set $U \subset X$, we will say that \emph{$U$ is $\sigma$-contracting under $\omega$} if there exists an increasing sequence $U_1 \subset U_2 \subset U_3 \subset \ldots$ of open subsets of $X$ such that $U=\bigcup_{i=1}^\infty U_i$ and $U_i$ contracts under $\omega$ for each $i \in \mathbb{N}$.

\begin{defi}
Let $\rho$ be an ergodic probability measure of $(\varphi_x^t)$. We say that \emph{$\varphi$ is $\rho$-almost everywhere stably synchronising} if for $\mathbb{P}$-almost every $\omega \in \Omega$ there exists a $\rho$-full measure open set that is $\sigma$-contracting under $\omega$.
\end{defi}

\begin{rmk} \label{pairwise sync}
It is not hard to show that $\varphi$ is $\rho$-almost everywhere stably synchronising if and only if the following statements both hold:
\begin{enumerate}[\indent (i)]
\item $\varphi$ is stable with respect to $\rho$;
\item there is a $\rho$-full set $A \subset X$ such that for all $x,y \in A$,
\[ \mathbb{P}( \omega \, : \, d(\varphi(t,\omega)x,\varphi(t,\omega)y) \to 0 \textrm{ as } t \to \infty ) \ = \ 1. \]
\end{enumerate}
\end{rmk}

\noindent The following is a generalised statement of \cite[Proposition~3]{LeJan87}:

\begin{prop} \label{le jan}
Let $\rho$ be an ergodic probability measure of $(\varphi_x^t)$, and suppose that $\varphi$ is stable with respect to $\rho$. Then there exists $n_\rho \in \mathbb{N}$ such that the following holds: for $\mathbb{P}$-almost every $\omega \in \Omega$, there exist open sets $U_1(\omega),\ldots,U_{n_\rho}(\omega)$ with $\rho(U_i(\omega))=\frac{1}{n_\rho}$ for each $1 \leq i \leq n_\rho$ such that
\begin{itemize}
\item $U_i(\omega)$ is $\sigma$-contracting under $\omega$ for each $1 \leq i \leq n_\rho$, and
\item for every $1 \leq i < j \leq n_\rho$, for every $x \in U_i(\omega)$ and $y \in U_j(\omega)$, $d(\varphi(t,\omega)x,\varphi(t,\omega)y)$ does not tend to $0$ as $t \to \infty$.
\end{itemize}
\end{prop}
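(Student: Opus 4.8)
The plan is to follow Le~Jan's proof of \cite[Proposition~3]{LeJan87}, replacing the two places where he invokes compactness of the state space—existence of a statistical‑equilibrium random measure, and a tightness argument—by a bounded‑martingale argument that works on any separable metric space. First I would set up the pointwise cluster structure. For $\mathbb{P}$‑a.e.\ $\omega$, on the set $S(\omega)$ of points that are asymptotically stable under $\omega$ (which is $\rho$‑full for a.e.\ $\omega$ by Lemma~\ref{stable traj}), I declare $x\approx_\omega y$ if some open set contracting under $\omega$ contains both $x$ and $y$. Using the elementary estimate $\mathrm{diam}(\varphi(t,\omega)(U\cup V))\le\mathrm{diam}(\varphi(t,\omega)U)+\mathrm{diam}(\varphi(t,\omega)V)$ when $U\cap V\ne\emptyset$ (and its variant when $U,V$ merely contain a pair of points whose trajectories converge), $\approx_\omega$ is an equivalence relation with open classes, each class is $\sigma$‑contracting (cover it---it is separable---by countably many contracting open sets all through one common point, and take finite unions), and on $S(\omega)$ the relation $\approx_\omega$ coincides with ``$d(\varphi(t,\omega)x,\varphi(t,\omega)y)\to0$''. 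Thus the classes are disjoint nonempty open sets (hence at most countably many), they exhaust $S(\omega)$, and points in distinct classes never synchronise. Everything then reduces to showing that the number of positive‑$\rho$‑measure classes is a.s.\ a finite constant $n_\rho$ and that each of these has $\rho$‑measure $1/n_\rho$.

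Next I would build the statistical‑equilibrium measure $\mu_\omega:=\lim_{t\to\infty}\varphi(t,\theta^{-t}\omega)_\ast\rho$. For bounded continuous $f$, the process $t\mapsto\int f\circ\varphi(t,\theta^{-t}\,\cdot\,)\,d\rho$ is a bounded martingale for a suitable increasing filtration (one can take $(\mathcal{F}_{-t}^0)_{t\ge0}$)---this uses stationarity and memorylessness of the noise together with stationarity of $\rho$---and so converges $\mathbb{P}$‑a.s.; running this over a countable convergence‑determining family of $f$ gives, for a.e.\ $\omega$, a sub‑probability measure $\mu_\omega$, which is $\mathcal{F}_{-\infty}^0$‑measurable, satisfies $\varphi(t,\omega)_\ast\mu_\omega=\mu_{\theta^t\omega}$ a.s.\ and $\int\mu_\omega\,\mathbb{P}(d\omega)=\rho$; since moreover $\mu_\omega(X)\le1$ always, the identity $\mathbb{E}[\mu_\cdot(X)]=1$ forces $\mu_\omega$ to be a.s.\ a probability measure, so no separate tightness argument is needed. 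Setting $\bar\rho(d\omega,dx):=\mathbb{P}(d\omega)\,\mu_\omega(dx)$, independence of $\mathcal{F}_{-\infty}^0$ and $\mathcal{F}_0^\infty$ gives $\bar\rho|_{\mathcal{F}_0^\infty\otimes\mathcal{B}(X)}=\mathbb{P}|_{\mathcal{F}_0^\infty}\otimes\rho$, whence $\bar\rho(O)=(\mathbb{P}\otimes\rho)(O)=1$ and therefore $\mu_\omega$ is a.s.\ carried by $S(\omega)$, i.e.\ by the disjoint union of the $\approx_\omega$‑classes. I would also need that $(\Theta^t)$ is $\bar\rho$‑ergodic; this follows from ergodicity of $\rho$ for $(\varphi_x^t)$ together with the memorylessness of the noise (cf.\ \cite{New15}, \cite{Kif86}).

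With this in hand the count is quick. The function $g(\omega,x):=\mu_\omega(C_\omega(x))$, where $C_\omega(x)$ is the $\approx_\omega$‑class of $x$, is $\Theta$‑invariant---because $\varphi(t,\omega)^{-1}(C_{\theta^t\omega}(\varphi(t,\omega)x))\cap S(\omega)=C_\omega(x)$ (here one uses that $\approx$ equals the synchronisation relation on $S$) and $\mu_\omega$ is carried by $S(\omega)$---hence $\bar\rho$‑a.s.\ equal to a constant; that constant is positive (for $\bar\rho$‑a.e.\ $(\omega,x)$, $x\in\mathrm{supp}\,\mu_\omega$, so the open set $C_\omega(x)$ has positive $\mu_\omega$‑mass) and, since the class‑masses sum to $1$, equal to $1/n_\rho$ for some $n_\rho\in\mathbb{N}$. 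So, for a.e.\ $\omega$, $\mu_\omega$ is carried by exactly $n_\rho$ of the classes, each of $\mu_\omega$‑mass $1/n_\rho$, with $n_\rho$ deterministic. To pass from $\mu_\omega$‑masses to $\rho$‑masses I would use that for any $\mathcal{F}_0^\infty$‑measurable random set $A(\omega)$ one has $\mathbb{E}[\mu_\omega(A(\omega))]=\mathbb{E}[\rho(A(\omega))]$ (again by independence of $\mathcal{F}_{-\infty}^0$ and $\mathcal{F}_0^\infty$ together with $\mathbb{E}\mu=\rho$). Applied to the union of the $\mu_\omega$‑positive classes this shows they are $\rho$‑full, so every positive‑$\rho$‑measure class is one of these $n_\rho$ classes. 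Finally, for $\varepsilon>0$, applying the same identity to the union $A_\varepsilon(\omega)$ of those classes of $\rho$‑measure exceeding $\tfrac1{n_\rho}+\varepsilon$ gives $\tfrac1{n_\rho}\mathbb{E}[M_\varepsilon]=\mathbb{E}[\mu_\omega(A_\varepsilon(\omega))]=\mathbb{E}[\rho(A_\varepsilon(\omega))]\ge(\tfrac1{n_\rho}+\varepsilon)\mathbb{E}[M_\varepsilon]$, where $M_\varepsilon$ counts those classes, so $M_\varepsilon=0$ a.s.; since the $n_\rho$ classes are $\rho$‑full, each must then have $\rho$‑measure exactly $1/n_\rho$ (in particular all $n_\rho$ have positive $\rho$‑measure). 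Labelling them measurably as $U_1(\omega),\dots,U_{n_\rho}(\omega)$ and quoting the first step for the $\sigma$‑contraction and non‑synchronisation properties finishes the proof.

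I expect the main obstacle to be the second step: having at one's disposal---or reproving in the present filtered‑noise framework---the $\bar\rho$‑ergodicity of the skew product, and handling cleanly the measurability issues and the ``for a.e.\ $\omega$, for all $t$'' bookkeeping (where, in continuous time, the right‑continuity property~(d) is used to upgrade a.e.‑for‑each‑$t$ statements to a.e.‑for‑all‑$t$ statements). By contrast, the fact that $\mu_\omega$ is automatically a probability measure---often the delicate point on non‑compact spaces---comes for free here from $\mathbb{E}[\mu_\cdot(X)]=1$ and $\mu_\omega(X)\le1$.
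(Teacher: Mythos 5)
Your first two steps are sound and run parallel to the paper's machinery (Proposition~\ref{ks12} together with its footnote for the pullback construction of $\mu_\omega$ on a non-compact $X$, Corollary~\ref{ergodic correspondence} for ergodicity of the skew product), and your observation that $g(\omega,x)=\mu_\omega(C_\omega(x))$ is $\Theta$-invariant, hence a.s.\ equal to some constant $1/n_\rho$, is a legitimate variant of the paper's route via Proposition~\ref{uniform ergodic} plus ``stability $\Rightarrow$ not atomless''. The genuine gap is in the passage from $\mu_\omega$-masses to $\rho$-masses. The identity $\mathbb{E}[\mu_\omega(A(\omega))]=\mathbb{E}[\rho(A(\omega))]$ is valid only for random sets $A$ that are $\mathcal{F}_0^\infty\otimes\mathcal{B}(X)$-measurable, precisely because $\omega\mapsto\mu_\omega$ is $\mathcal{F}_{-\infty}^0$-measurable and the two $\sigma$-algebras are independent; but ``the union of the $\mu_\omega$-positive classes'' is defined through $\mu_\omega$, i.e.\ through the past, so it is not such a set and the identity cannot be applied to it. (Some restriction is clearly essential: taking $A(\omega)=\mathrm{supp}\,\mu_\omega$ would give $1=\mathbb{E}[\rho(A(\omega))]$, which is false whenever $\rho$ is atomless.) Hence your claim that the $\mu_\omega$-positive classes are $\rho$-full---equivalently, that at most $n_\rho$ classes have positive $\rho$-measure---is unproved. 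Your $\varepsilon$-argument does apply (to the genuinely future-measurable set $A_\varepsilon(\omega)$, with the equality $\tfrac{1}{n_\rho}\mathbb{E}[M_\varepsilon]=\mathbb{E}[\mu_\omega(A_\varepsilon(\omega))]$ corrected to ``$\geq$''), but it only yields the upper bound $\rho(\textrm{class})\leq 1/n_\rho$, which is compatible with, say, $n_\rho=2$ and class masses $\tfrac12,\tfrac14,\tfrac14$; so it does not finish the proof. (A smaller glossed point: to know that your a.s.\ limits of $\int f\,d\big(\varphi(t,\theta^{-t}\omega)_\ast\rho\big)$ are represented by a measure at all, you need the compactification device of the footnote to Proposition~\ref{ks12}; your ``$\mathbb{E}[\mu_\cdot(X)]=1$'' observation then indeed replaces tightness.)

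The repair is exactly the mechanism the paper borrows from \cite[Proposition~3]{LeJan87}: use the pullback convergence together with the $(\theta^t)$-invariance of $\mathbb{P}$, not an independence identity. Since the classes are open, the portmanteau inequality applied to $\varphi(t_k,\theta^{-t_k}\omega)_\ast\rho\to\mu_\omega$ gives, for a.e.\ $\omega$ and every $\varepsilon>0$, that for all large $k$ the sample point $\theta^{-t_k}\omega$ has at least $n_\rho$ distinct classes of $\rho$-measure exceeding $\tfrac{1}{n_\rho}-\varepsilon$ (the $\varphi(t_k,\theta^{-t_k}\omega)$-preimage of a class is contained in a single class, and distinct classes have disjoint preimages, using backward-invariance of $O$ and the equality of $\approx$ with the synchronisation relation). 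By $\theta$-invariance of $\mathbb{P}$ and bounded convergence the same statement holds a.s.\ at time $0$; letting $\varepsilon\downarrow 0$ and using that the class masses are disjointly summing to at most $1$ forces exactly $n_\rho$ classes of $\rho$-measure exactly $\tfrac{1}{n_\rho}$ and no further positive-measure classes---which also renders your $A_\varepsilon$ step unnecessary. Since you already construct $\mu_\omega$ as an a.s.\ pullback limit, this fix is available within your framework, but it is a genuinely different ingredient from the independence identity on which your written argument rests.
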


\noindent (The proof will be outlined in Section~3.)
\\ \\
So the situation that $\varphi$ is $\rho$-almost everywhere stably synchronising is precisely the situation that $n_\rho=1$. We go on to present our new sharp criteria for this situation.

\begin{defi}
Given points $x,y,p \in X$, we will say that \emph{$(x,y)$ is contractible towards $p$} if for every $\varepsilon>0$,
\[ \mathbb{P}( \, \omega \, : \, \exists \, t \in \mathbb{T}^+ \textrm{ s.t.~} (\varphi(t,\omega)x,\varphi(t,\omega)y) \in B_\varepsilon(p) \times B_\varepsilon(p) \, ) \ > \ 0. \]
\end{defi}

\noindent Since the map $t \mapsto \varphi(t,\omega)u$ is right-continuous for all $u \in X$, this is equivalent to saying that there exists $t \in \mathbb{T}^+ \cap \mathbb{Q}$ such that
\[ \mathbb{P}( \, \omega \, : \, (\varphi(t,\omega)x,\varphi(t,\omega)y) \in B_\varepsilon(p) \times B_\varepsilon(p) \, ) \ > \ 0. \]

\begin{defi} \label{contractible to set}
Given points $x,y \in X$ and a set $A \subset X$, we will say that \emph{$(x,y)$ is contractible towards $A$} if for every neighbourhood $V$ of $\Delta_A$ in $X \times X$,
\[ \mathbb{P}( \, \omega \, : \, \exists \, t \in \mathbb{T}^+ \textrm{ s.t.~} (\varphi(t,\omega)x,\varphi(t,\omega)y) \in V \, ) \ > \ 0. \]
\end{defi}

\begin{lemma} \label{exists point}
For any $x,y \in X$ and $A \subset X$, $(x,y)$ is contractible towards $A$ if and only if there exists $p \in A$ such that $(x,y)$ is contractible towards $p$.
\end{lemma}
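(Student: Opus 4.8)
The plan is to prove the two implications separately. The ``if'' direction is immediate from the definitions, and essentially all of the content lies in the ``only if'' direction, which I would handle by contradiction together with a Lindel\"of argument.

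For the ``if'' direction, I would fix $p \in A$ with $(x,y)$ contractible towards $p$, and take an arbitrary neighbourhood $V$ of $\Delta_A$ in $X \times X$. Since $(p,p) \in \Delta_A$, the set $V$ contains an open set around $(p,p)$, and because the boxes $B_\varepsilon(p) \times B_\varepsilon(p)$, $\varepsilon > 0$, form a neighbourhood base at $(p,p)$ in the product topology, I can choose $\varepsilon > 0$ with $B_\varepsilon(p) \times B_\varepsilon(p) \subseteq V$. Then the event witnessing contractibility of $(x,y)$ towards $p$ at scale $\varepsilon$ is contained in $\{\omega : \exists\, t \in \mathbb{T}^+ \textrm{ s.t.~} (\varphi(t,\omega)x,\varphi(t,\omega)y) \in V\}$, so this last event has positive probability; since $V$ was arbitrary, $(x,y)$ is contractible towards $A$.

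For the ``only if'' direction, I would assume $(x,y)$ is contractible towards $A$ but, seeking a contradiction, that no $p \in A$ has $(x,y)$ contractible towards it. Then to each $p \in A$ I attach an $\varepsilon_p > 0$ for which $\mathbb{P}(\omega : \exists\, t \in \mathbb{T}^+ \textrm{ s.t.~} (\varphi(t,\omega)x,\varphi(t,\omega)y) \in B_{\varepsilon_p}(p) \times B_{\varepsilon_p}(p)) = 0$; in particular, for each single $t \in \mathbb{T}^+$ the set $\{\omega : (\varphi(t,\omega)x,\varphi(t,\omega)y) \in B_{\varepsilon_p}(p) \times B_{\varepsilon_p}(p)\}$ is $\mathbb{P}$-null. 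Now $\{B_{\varepsilon_p}(p) \times B_{\varepsilon_p}(p) : p \in A\}$ is an open cover of $\Delta_A$; since $X$ is separable metric, $X \times X$ is second countable, so its subspace $\Delta_A$ is Lindel\"of, and I can extract a countable subcover indexed by some $p_1, p_2, \ldots \in A$. Setting $V := \bigcup_{i \geq 1} B_{\varepsilon_{p_i}}(p_i) \times B_{\varepsilon_{p_i}}(p_i)$, an open neighbourhood of $\Delta_A$, countable subadditivity gives that $\{\omega : (\varphi(t,\omega)x,\varphi(t,\omega)y) \in V\}$ is $\mathbb{P}$-null for every $t \in \mathbb{T}^+$. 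Finally, right-continuity of $t \mapsto \varphi(t,\omega)u$ for each $u \in X$, together with openness of $V$ (the same reduction as the one recorded just after the definition of contractibility towards a point, now applied to the open set $V$), shows that $\{\omega : \exists\, t \in \mathbb{T}^+ \textrm{ s.t.~} (\varphi(t,\omega)x,\varphi(t,\omega)y) \in V\}$ coincides with the union of the above sets over $t \in \mathbb{T}^+ \cap \mathbb{Q}$, hence is a countable union of $\mathbb{P}$-null sets and so is itself $\mathbb{P}$-null. This contradicts the hypothesis that $(x,y)$ is contractible towards $A$.

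The step I expect to need the most care is the passage from ``every box $B_{\varepsilon_p}(p) \times B_{\varepsilon_p}(p)$ is null at each time'' to ``some fixed neighbourhood of $\Delta_A$ is null at each time'': the family of bad boxes is a priori uncountable, and it is precisely separability of $X$ --- via the Lindel\"of property of $\Delta_A$ --- that lets one replace it by a countable subfamily whose union is still controlled via countable subadditivity. The only other point requiring attention is the standard reduction of the ``trajectories ever enter $V$'' event to a countable union over rational times, which is exactly what right-continuity of the cocycle provides (and which is also what makes that event measurable in the first place).
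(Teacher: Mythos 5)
Your proposal is correct and follows essentially the same route as the paper: the paper also argues the ``only if'' direction in contrapositive form, attaching to each $p \in A$ a null box $B_\varepsilon(p)\times B_\varepsilon(p)$, using second countability (the Lindel\"of property) to pass to a countable union forming an open neighbourhood of $\Delta_A$, and then combining countable subadditivity over rational times with right-continuity of the trajectories. The only cosmetic difference is that the paper takes the union of \emph{all} open sets that are never visited with positive probability and extracts a countable subfamily from that union, whereas you extract a countable subcover of $\Delta_A$ directly from the boxes; the substance is identical.
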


\begin{proof}
It is clear that if there exists $p \in A$ such that $(x,y)$ is contractible towards $p$, then $(x,y)$ is contractible towards $A$. Now suppose there does not exist $p \in A$ such that $(x,y)$ is contractible towards $p$. Let
\begin{align*}
\mathcal{U} \ :=& \ \{ \, \textrm{open } V \subset X \times X \, : \, \mathbb{P}( \, \omega \, : \, \exists \, t \in \mathbb{T}^+ \textrm{ s.t.~} (\varphi(t,\omega)x,\varphi(t,\omega)y) \in V \, ) \, = \, 0 \, \} \\
=& \ \{ \, \textrm{open } V \subset X \times X \, : \, \textrm{for all } t \in \mathbb{T}^+ \cap \mathbb{Q}, \ \mathbb{P}( \, \omega \, : \, (\varphi(t,\omega)x,\varphi(t,\omega)y) \in V \, ) \, = \, 0 \, \}
\end{align*}
\noindent and let $W:=\bigcup_{V \in \mathcal{U}} V$. For every $p \in A$, since $(x,y)$ is not contractible towards $p$, there exists $\varepsilon>0$ such that $B_\varepsilon(p) \times B_\varepsilon(p) \subset W$. Hence $\Delta_A \subset W$. Now since $X \times X$ is second-countable, there exists a countable subcollection $\mathcal{V}$ of $\mathcal{U}$ such that $W=\bigcup_{V \in \mathcal{V}} V$. It therefore follows in particular that for every $t \in \mathbb{T}^+ \cap \mathbb{Q}$,
\[ \mathbb{P}( \, \omega \, : \, (\varphi(t,\omega)x,\varphi(t,\omega)y) \in W \, ) \ = \ 0. \]
\noindent Hence
\[ \mathbb{P}( \, \omega \, : \, \exists \, t \in \mathbb{T}^+ \textrm{ s.t.~} (\varphi(t,\omega)x,\varphi(t,\omega)y) \in W \, ) \ = \ 0. \]
\noindent So $(x,y)$ is not contractible towards $A$.
\end{proof}

\noindent For any $p \in X$, we will write $\mathfrak{C}_p \subset X \times X$ for the set of pairs that are contractible towards $p$. For any $A \subset X$, we will write $\mathfrak{C}_A \subset X \times X$ for the set of pairs that are contractible towards $A$.

\begin{defi}
Let $\rho$ be an ergodic probability measure of $(\varphi_x^t)$. We will say that a point $x \in \mathrm{supp}\,\rho$ is \emph{$\rho$-transitive} if for every open $U \subset X$ with $\rho(U)>0$,
\[ \mathbb{P}( \, \omega \, : \, \exists \, t \in \mathbb{T}^+ \textrm{ s.t.~} \varphi(t,\omega)x \in U \, ) \ > \ 0. \]
\end{defi}
\noindent This is equivalent to saying that for some $t \in \mathbb{T}^+ \cap \mathbb{Q}$, $\varphi_x^t(U)>0$. We will write $A_\rho$ for the set of points in $\mathrm{supp}\,\rho$ that are $\rho$-transitive.
\\ \\
By the ergodic theorem for Markov processes,\footnote{See e.g.~\cite[Corollary~57]{New15}, with $Y$ being the set of right-continuous paths in $X$.} $\rho$-almost every $x \in \mathrm{supp}\,\rho$ has the property that for $\mathbb{P}$-almost all $\omega \in \Omega$, for every $T \in \mathbb{T}^+$, $\{\varphi(t,\omega)x:t \geq T\}$ is dense in $\mathrm{supp}\,\rho$. Hence in particular, $\rho(A_\rho)=1$.

\begin{defi} \label{rho rectangle}
Let $\rho$ be a probability measure on $X$. A \emph{$\rho$-full-length rectangle} is a set $A \subset X \times X$ taking the form $A=A_1 \times A_2$ where $A_1,A_2 \in \mathcal{B}(X)$ with $\rho(A_1)>0$ and $\rho(A_2)=1$.
\end{defi}

\noindent Our main result is the following:

\begin{thm} \label{main}
Let $\rho$ be an ergodic probability measure of $(\varphi_x^t)$, and suppose that $\varphi$ is stable with respect to $\rho$. The following statements are equivalent:
\begin{enumerate}[\indent (i)]
\item there is a non-$\rho$-null set $R \subset X$ such that for each $p \in R$, the set $\mathfrak{C}_p$ contains a $\rho$-full-length rectangle;
\item the set $\mathfrak{C}_{A_\rho}$ contains a $\rho$-full-length rectangle;
\item $\varphi$ is $\rho$-almost everywhere stably synchronising;
\item there is a $\rho$-full set $A \subset \mathrm{supp}\,\rho$ such that given any $x,y \in A$, $\mathbb{P}$-almost every $\omega \in \Omega$ has the property that for any open $U \subset X$ with $\rho(U)>0$ there exists $t \in \mathbb{T}^+$ such that $\varphi(t,\omega)x,\varphi(t,\omega)y \in U$.
\end{enumerate}
\end{thm}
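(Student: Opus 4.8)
The plan is to prove the cycle of implications (iii)$\Rightarrow$(iv)$\Rightarrow$(ii)$\Rightarrow$(i)$\Rightarrow$(iii), of which the bulk of the work is the last implication (i)$\Rightarrow$(iii); the others are comparatively soft.

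For (iii)$\Rightarrow$(iv): if $\varphi$ is $\rho$-almost everywhere stably synchronising, then by Remark~\ref{pairwise sync} we get a $\rho$-full set $A$ on which all pairs synchronise almost surely, and we may intersect with $A_\rho$ and shrink to a set on which the $\sigma$-contracting full-measure open set exists simultaneously with the synchronisation property; the point is that for $x,y\in A$, the joint trajectory $(\varphi(t,\omega)x,\varphi(t,\omega)y)$ converges to $\Delta$, and because $x$ is $\rho$-transitive (so its trajectory enters any $\rho$-positive open $U$ with positive probability, hence — once one invokes a $0$-$1$ law or the fact that it visits $U$ at arbitrarily large times almost surely, which follows from the ergodic theorem for the Markov process as quoted before Definition~\ref{rho rectangle}) the trajectory of $x$ (and hence eventually that of $y$, since they are asymptotically close) is in $U$ at some time, almost surely. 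For (iv)$\Rightarrow$(ii): take the $\rho$-full-length rectangle $A\times A$ with $A$ the set from (iv); for any $(x,y)\in A\times A$ and any neighbourhood $V$ of $\Delta_{A_\rho}$, pick any $\rho$-transitive point $p$ together with a small ball $B_\varepsilon(p)\times B_\varepsilon(p)\subset V$, and use that property (iv) with $U=B_\varepsilon(p)$ puts both trajectories in $B_\varepsilon(p)$ simultaneously with probability one, hence $(x,y)$ is contractible towards $A_\rho$ by Lemma~\ref{exists point} and Definition~\ref{contractible to set}. For (ii)$\Rightarrow$(i): Lemma~\ref{exists point} gives, for each $(x,y)$ in the rectangle from (ii), a point $p(x,y)\in A_\rho$ with $(x,y)$ contractible towards $p(x,y)$; a measurable-selection / Fubini argument shows that some single $p\in A_\rho$ works for a $\rho$-full-length sub-rectangle, giving a non-$\rho$-null set $R$ (in fact a single point suffices) as in (i) — though here one should be slightly careful and instead argue directly: if (i) failed, then for $\rho$-a.e.\ $p$ the set $\mathfrak C_p$ contains no full-length rectangle, and one deduces a contradiction with (ii) by a countable-cover argument in the spirit of the proof of Lemma~\ref{exists point}.

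The hard part is (i)$\Rightarrow$(iii), and this is where the idea from \cite{Hom13} generalised via \cite[Proposition~2]{LeJan87} comes in. Assume (i): there is a non-$\rho$-null set $R$ such that each $p\in R$ has $\mathfrak C_p\supset A_1(p)\times A_2(p)$ with $\rho(A_1(p))>0$, $\rho(A_2(p))=1$. First I would invoke Proposition~\ref{le jan}: stability with respect to $\rho$ gives the number $n_\rho$ and, for $\mathbb P$-a.e.\ $\omega$, the partition into $\sigma$-contracting open sets $U_1(\omega),\dots,U_{n_\rho}(\omega)$ each of $\rho$-measure $1/n_\rho$, with non-synchronisation across distinct pieces. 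The goal is to show $n_\rho=1$; suppose for contradiction $n_\rho\ge 2$. The key structural input is that (using the $\Theta$-invariance built into Le~Jan's construction, specifically that these sets are built from the ergodic decomposition of the sample measures $\rho_\omega$) the event "$x\in U_i(\omega)$" has a well-defined meaning and the map $\omega\mapsto$ "which piece contains a $\rho$-transitive trajectory's asymptotic future" is measurable. Now pick a $\rho$-transitive $p$ lying in $R$ (possible since $\rho(A_\rho)=1$ and $R$ is non-$\rho$-null, so $R\cap A_\rho\neq\emptyset$, and in fact of positive measure). Because $p$ is $\rho$-transitive, its trajectory enters the $\rho$-positive open set $U_1(\omega)$ (for a suitable version/relabelling) with positive probability, and by the cocycle property once it is inside $U_1(\theta^s\omega)$ it is thereafter asymptotically locked into that synchronising cluster. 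On the other hand, contractibility of $A_1(p)\times A_2(p)$ towards $p$ means: for $x\in A_1(p)$, $y\in A_2(p)$, with positive probability the pair $(\varphi(t,\omega)x,\varphi(t,\omega)y)$ simultaneously enters an arbitrarily small neighbourhood of $p$ — and a small enough neighbourhood of $p$, once the trajectory later falls into a contracting $U_k$ from Le~Jan's list, forces $x$ and $y$ into the \emph{same} cluster. Then a further application of $\rho$-transitivity (steering this common cluster's trajectory to overlap with the cluster of a prescribed third point) and the Markov/cocycle $0$-$1$ argument propagates this: one shows that for $\rho$-a.e.\ $x$ and $\rho$-a.e.\ $y$, $(x,y)\in\mathfrak C_{A_\rho}$ forces $\limsup_t$ of the joint trajectory to hit $\Delta$ with positive probability, and then a tail/$0$-$1$ law (Lemma~\ref{stable traj}-style backward invariance of the relevant event under $(\Theta^t)$, plus ergodicity of $\mathbb P|_{\mathcal F_0^\infty}\otimes\rho$) upgrades "positive probability" to "probability one" and upgrades "for these particular $x,y$" to "for a $\rho$-full set of pairs". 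That yields condition (ii) of Remark~\ref{pairwise sync}, hence $\rho$-a.e.\ stable synchronisation, contradicting $n_\rho\ge 2$.

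I expect the genuine technical obstacles to be two-fold. First, making rigorous the statement "once the joint trajectory enters a small neighbourhood of the $\rho$-transitive point $p$ and then later enters one of Le~Jan's contracting sets, both components end up in the same cluster": this requires relating the (random, $\omega$-dependent) Le~Jan clusters at different times via the cocycle and showing the clustering is coherent along the trajectory — i.e.\ that $\varphi(s,\omega)^{-1}U_i(\theta^s\omega)$ refines into the $U_j(\omega)$'s appropriately, which is essentially the content one must extract from the proof of Proposition~\ref{le jan} (the sets come from atoms of the $\sigma$-algebra $\bigcap_t \Theta^{-t}(\text{stuff})$). Second, the promotion from "positive probability" to "almost sure" and from "some pair" to "$\rho$-full set of pairs" must be handled by the right $0$-$1$ law: the natural move is to consider the $(\mathbb P\otimes\rho\otimes\rho)$-measurable event $\{(\omega,x,y): d(\varphi(t,\omega)x,\varphi(t,\omega)y)\to 0\}$, observe it is backward-invariant under the skew product $(\omega,x,y)\mapsto(\theta^t\omega,\varphi(t,\omega)x,\varphi(t,\omega)y)$, and — provided $\rho\otimes\rho$ (or rather the relevant diagonal-supported family) interacts correctly with the ergodicity — conclude it has measure $0$ or $1$; the subtlety is that $\rho\otimes\rho$ need not be ergodic for this skew product, so one instead argues fibrewise in $\omega$ using that the per-$\omega$ synchronisation clusters partition a full-measure set, together with the transitivity hypothesis to rule out the multi-cluster fixed point of the induced dynamics on the (finite) set of clusters. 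I would structure the write-up so that the combinatorial "cluster-merging" lemma is isolated first, and the $0$-$1$/full-set upgrade second.
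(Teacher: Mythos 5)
Your cycle (iii)$\Rightarrow$(iv)$\Rightarrow$(ii) is sound and close to the paper's own soft steps, but the two implications that carry the weight of the theorem are not established. The central gap is in (i)$\Rightarrow$(iii). The two points you flag as ``technical obstacles'' are not technicalities; they are the whole content, and the second one fails as stated. The synchronisation event $\{(\omega,x,y): x\sim_\omega y\}$ is indeed backward-invariant under $\Theta_{[2]}^t$, but $\mathbb{P}\otimes\rho\otimes\rho$ is neither invariant nor ergodic for this skew product, so no $0$--$1$ law is available at that level; the invariant measure with the right marginals is $\mathbb{P}|_{\mathcal{F}_0^\infty}\otimes\bar{\mu}^{(2)}$, and when $n_\rho\geq 2$ the probability $\mathbb{P}(\omega: x\sim_\omega y)$ is for typical pairs strictly between $0$ and $1$ (roughly $1/n_\rho$: the pair synchronises exactly when both points fall in the same random cluster), so upgrading ``positive probability'' to ``probability one'' is false in general and would be circular, since it presupposes $n_\rho=1$. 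The paper's proof of the hard implication ((ii)$\Rightarrow$(iii) there) never performs such an upgrade: it applies Poincar\'{e} recurrence to $\mathbb{P}|_{\mathcal{F}_0^\infty}\otimes\bar{\mu}^{(2)}$ under $\Theta_{[2]}^t$ to conclude that the set $Y$ of off-diagonal pairs which synchronise with \emph{positive} probability is $\bar{\mu}^{(2)}$-null; it then shows by the steering argument you gesture at (contractibility towards a $\rho$-transitive $p$, stability giving an open $V$ contracting with positive probability and $U\subset V$ with $\rho(U)>0$, and memorylessness to multiply the three probabilities $k_0k_1\mathbb{P}(E_V)$) that \emph{every} pair of the rectangle $A_1\times A_2$ lies in $Y\cup\Delta_X$; finally, $n$-uniformity of $(\mu_\omega)$ (Proposition~\ref{uniform ergodic} and Remark~\ref{diagonal measure}) gives $\bar{\mu}^{(2)}\bigl((A_1\times A_2)\setminus\Delta_X\bigr)=\tfrac{n-1}{n}\rho(A_1)$, forcing $n=1$. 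In other words, the decisive device is the sample-measure product $\bar{\mu}^{(2)}$, not $\rho\otimes\rho$ with a cluster-coherence/$0$--$1$ argument; without it your plan has no mechanism that actually distinguishes $n_\rho=1$ from $n_\rho\geq 2$.

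The leg (ii)$\Rightarrow$(i) of your cycle is also not secured. A measurable-selection or countable-cover argument yields at best a single $p\in A_\rho$ with $(\rho\otimes\rho)\bigl(\mathfrak{C}_p\cap(A_1\times A_2)\bigr)>0$, which is strictly weaker than $\mathfrak{C}_p$ containing a $\rho$-full-length rectangle (Definition~\ref{rho rectangle} requires one factor to have full $\rho$-measure), and nothing in (ii) prevents the assignment $(x,y)\mapsto p(x,y)$ from fragmenting $A_1\times A_2$ into countably many non-rectangular pieces. The paper sidesteps this entirely by closing the cycle as (i)$\Rightarrow$(ii)$\Rightarrow$(iii)$\Rightarrow$(iv)$\Rightarrow$(i): once (iv) holds, the simultaneity statement holds with probability one for \emph{every} open set of positive $\rho$-measure, so $(x,y)$ is contractible towards every point of $\mathrm{supp}\,\rho$, and $R=\mathrm{supp}\,\rho$ witnesses (i). You should restructure along those lines and replace the heuristic (i)$\Rightarrow$(iii) by the $\bar{\mu}^{(2)}$/Poincar\'{e}-recurrence argument.
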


\noindent Let us now consider the example of a ``double-well potential perturbed by Gaussian white noise''. Fix an integer $d \geq 2$. Let $\mathbb{T}=\mathbb{R}$. Let $\Omega:=\{\omega \in C(\mathbb{R},\mathbb{R}^d) : \omega(0)=\mathbf{0}\}$, let $\mathcal{F}$ be the smallest $\sigma$-algebra on $\Omega$ with respect to which the projections $\omega \mapsto \omega(t)$ are measurable for all $t \in \mathbb{R}$, let $\mathbb{P}$ be the Wiener measure on $(\Omega,\mathcal{F})$, and for each $\tau \in \mathbb{R}$ let $\theta^\tau\col \Omega \to \Omega$ be given by $(\theta^\tau\omega)(t)=\omega(t+\tau)-\omega(\tau)$. Let $X=\mathbb{R}^d$ (equipped with the Euclidean metric). As in \cite{FGS14}, let $\varphi$ be such that for all $\omega \in \Omega$ and $x \in \mathbb{R}^d$, the function $u(t)=\varphi(t,\omega)x$ is the solution of the integral equation
\begin{align*}
u(t) \ &= \ x + \int_0^t (1-|u(s)|^2)u(s)\,ds + \omega(t) \\
&= \ x + \int_0^t b(u(s))\,ds + \omega(t)
\end{align*}
\noindent where $b(y):=(1-|y|^2)y$ for all $y \in \mathbb{R}^d$. In other words, $\varphi$ is the ``RDS generated by the stochastic differential equation''
\[ du_t \ = \ (1-|u_t|^2)u_t\,dt \; + \, dW_t. \]
\noindent It is not hard to show (e.g.~by computing explicitly the Jacobian of $b$) that, as in \cite{FGS14}, $b$ satisfies the \emph{one-sided Lipschitz condition}---that is to say, there exists $L \in \mathbb{R}$ such that for all $y_1,y_2 \in \mathbb{R}^d$,
\[ ( b(y_2) - b(y_1) ) \boldsymbol{\cdot} (y_2-y_1) \ \leq \ L|y_2 - y_1|^2. \]
\noindent Now for any $\omega \in \Omega$, $x_1,x_2 \in \mathbb{R}^d$ and $t_0 \in \mathbb{R}$, if we let $u_1(t):=\varphi(t,\theta^{t_0}\omega)x_1$ and $u_2(t):=\varphi(t,\theta^{t_0}\omega)x_2$ for all $t \geq 0$, we find that
\[ u_2(t) - u_1(t) \ = \ (u_2(0) - u_1(0)) \, + \int_0^t b(u_2(s)) - b(u_1(s)) \, ds \]
\noindent and therefore
\[ \left. \frac{d}{d\tau} |u_2(\tau) - u_1(\tau)|^2 \right|_{\tau=t} \ = \ 2 ( b(u_2(t)) - b(u_1(t)) ) \boldsymbol{\cdot} (u_2(t) - u_1(t)) \ \leq \ 2L|u_2(t) - u_1(t)|^2. \]
\noindent Gr\"{o}nwall's inequality then gives that
\[ |u_2(t) - u_1(t)| \ \leq \ |u_2(0)-u_1(0)|e^{Lt}. \]
\noindent In other words, for all $\omega \in \Omega$, $x_1,x_2 \in \mathbb{R}^d$, and $t_0,t_1 \in \mathbb{R}$ with $t_1 \geq t_0$, we have
\[ |\varphi(t_1,\omega)x_2 \, - \, \varphi(t_1,\omega)x_1| \ \leq \ e^{L(t_1-t_0)}|\varphi(t_0,\omega)x_2 \, - \, \varphi(t_0,\omega)x_1|. \]
\noindent As a consequence, we have that for any $A \subset \mathbb{R}^d$ and any $\omega \in \Omega$,
\begin{equation} \label{d-c}
\mathrm{diam}(\varphi(n,\omega)A) \to 0 \textrm{ as $n \to \infty$ in $\mathbb{N}$} \hspace{3mm} \Longrightarrow \hspace{3mm} \textrm{$A$ contracts under $\omega$.}
\end{equation}
\noindent Now as in \cite{FGS14}, there exists a unique $(\varphi_x^t)$-ergodic probability measure $\rho$ on $\mathbb{R}^d$, and $\rho$ has full support. By \cite[Example~4.8]{FGS14}, the maximal Lyapunov exponent associated to $\rho$ is negative. As is described in Section~4 of \cite{FGS14}, it follows that for $(\mathbb{P} \otimes \rho)$-almost all $(\omega,x) \in \Omega \times \mathbb{R}^d$, there is a neighbourhood $U$ of $x$ such that $\mathrm{diam}(\varphi(n,\omega)U) \to 0$ as $n \to \infty$ in $\mathbb{N}$; so (\ref{d-c}) then gives that $\varphi$ is stable with respect to $\rho$. Now (as with any additive-noise SDE) one can show that every point in $\mathbb{R}^d$ is $\rho$-transitive: Fix any $x \in \mathbb{R}^d$ and any non-empty open $U \subset \mathbb{R}^d$; take any $y \in U$ and, selecting a sufficiently large value $\eta_0>0$, take a sample point $\omega_0 \in \Omega$ with
\[ \omega_0(t) \ = \ \eta_0t(y-x) \hspace{3mm} \forall \, t \in [0,\tfrac{1}{\eta_0}]. \]
\noindent Then we will have that $\varphi(\frac{1}{\eta_0},\omega_0) \in U$. Since the Wiener measure $\mathbb{P}$ has full support in the topology of uniform convergence on compact sets (\cite[Proposition~477F]{Fre13}), it follows that $\varphi_x^{1\!/\!\eta_0}(U)>0$. Since $U$ was arbitrary, $x$ is $\rho$-transitive. Now it is not hard to see that every $(x,y) \in \mathbb{R}^d \times \mathbb{R}^d$ is contractible towards the point $(1,\mathbf{0}) \in \mathbb{R}^d$: Fixing any $\varepsilon>0$, we can select sufficiently large values $\eta_1,\eta_2>0$ that if we take a sample point $\omega_1$ with
\[ \omega_1(t) \ = \ \left\{ \!\! \begin{array}{c l} (\eta_1\eta_2t,\mathbf{0}) & t \in [0,\frac{1}{\eta_1}] \\ (\eta_2,\mathbf{0}) & t \in [\frac{1}{\eta_1},\infty), \end{array} \right. \]
\noindent we will have that $\,\varphi(t,\omega_1)x, \, \varphi(t,\omega_1)y \in B_\varepsilon((1,\mathbf{0}))\,$ for all sufficiently large $t$; so once again, since $\mathbb{P}$ has full support, it follows that $(x,y)$ is contractible towards $(1,\mathbf{0})$. So then, $\varphi$ satisfies hypothesis~(ii) of Theorem~\ref{main} (since $A_\rho=\mathbb{R}^d$ and $\mathfrak{C}_{\mathbb{R}^d} \supset \mathfrak{C}_{(1,\mathbf{0})} = \mathbb{R}^d \times \mathbb{R}^d$), and therefore $\varphi$ is $\rho$-almost stably synchronising. Now by Remark~\ref{pairwise sync}, there exists a $\rho$-full set $A \subset X$ such that for all $x \in A$,
\[ \mathbb{P}( \omega \, : \,  \textrm{$x$ is asymptotically stable under $\omega$} ) \ = \ 1 \]
\noindent and for all $x,y \in A$,
\[ \mathbb{P}( \omega \, : \, d(\varphi(t,\omega)x,\varphi(t,\omega)y) \to 0 \textrm{ as } t \to \infty ) \ = \ 1. \]
\noindent Now for every $x \in X$ and $t>0$, $\varphi_x^t$ is equivalent to the Lebesgue measure, and also the stationary measure $\rho$ is equivalent to the Lebesgue measure. Hence, for every $x \in X$ we have that $\varphi_x^1(A)=1$, and for all $x,y \in X$ we have that
\[ \mathbb{P}( \omega \, : \, \varphi(1,\omega)x,\varphi(1,\omega)y \in A ) \ = \ 1. \]
\noindent Consequently, due to the memorylessness of the noise, we can conclude that for \emph{all} $x \in X$,
\[ \mathbb{P}( \omega \, : \,  \textrm{$x$ is asymptotically stable under $\omega$} ) \ = \ 1, \]
\noindent and for \emph{all} $x,y \in X$,
\[ \mathbb{P}( \omega \, : \, d(\varphi(t,\omega)x,\varphi(t,\omega)y) \to 0 \textrm{ as } t \to \infty ) \ = \ 1. \]

\section{Invariant measures and the proof of Theorem~\ref{main}} \label{proof section}

We start by introducing some basic theory of invariant measures of random dynamical systems. We define the projections $\pi_\Omega\col \Omega \times X \to \Omega$ and $\pi_X\col \Omega \times X \to X$ by $\pi_\Omega(\omega,x)=\omega$ and $\pi_X(\omega,x)=x$.
\\ \\
A \emph{random probability measure on $X$} is an $\Omega$-indexed family $(\mu_\omega)_{\omega \in \Omega}$ of probability measures on $X$ such that the map $\omega \mapsto \mu_\omega(A)$ is measurable for all $A \in \mathcal{B}(X)$. We will say that two random probability measures $(\mu_\omega^1)_{\omega \in \Omega}$ and $(\mu_\omega^2)_{\omega \in \Omega}$ are \emph{equivalent} if for $\mathbb{P}$-almost all $\omega \in \Omega$, $\mu_\omega^1=\mu_\omega^2$. For any random probability measure $(\mu_\omega)$, we may define a probability measure $\mu$ on the product space $(\Omega \times X, \mathcal{F} \otimes \mathcal{B}(X))$ by
\[ \mu(A) \ = \ \int_\Omega \mu_\omega(A_\omega) \, \mathbb{P}(d\omega) \hspace{3mm} \forall \, A \in \mathcal{B}(X) \]
\noindent where $A_\omega:=\{x \in X : (\omega,x) \in A\}$. We refer to $\mu$ as the \emph{integrated form of $(\mu_\omega)$}. Note that two equivalent random probability measures share the same integrated form.
\\ \\
We will say that a random probability measure $(\mu_\omega)$ is \emph{atomless} if for $\mathbb{P}$-almost every $\omega \in \Omega$, $\mu_\omega$ is an atomless probability measure (i.e.~$\mu_\omega(\{x\})=0$ for all $x \in X$). Given an integer $n \in \mathbb{N}$, we will say that a random probability measure $(\mu_\omega)$ is \emph{$n$-uniform} if for $\mathbb{P}$-almost every $\omega \in \Omega$ there exist distinct points $x_1,\ldots,x_n \in X$ such that $\mu_\omega = \frac{1}{n} \sum_{i=1}^n \delta_{x_i}$.

\begin{rmk} \label{diagonal measure}
Given a random probability measure $(\mu_\omega)$, let $\bar{\mu}$ be the probability measure on $X$ given by
\[ \bar{\mu}(A) \ = \ \int_\Omega \mu_\omega(A) \, \mathbb{P}(d\omega) \]
\noindent for all $A \in \mathcal{B}(X)$, and let $\bar{\mu}^{(2)}$ be the probability measure on $X \times X$ given by
\[ \bar{\mu}^{(2)}(A) \ = \ \int_\Omega \mu_\omega \otimes \mu_\omega(A) \, \mathbb{P}(d\omega) \]
\noindent for all $A \in \mathcal{B}(X \times X)$. (Note that $\bar{\mu}$ is precisely $\pi_{X\ast}\mu$, where $\mu$ is the integrated form of $(\mu_\omega)$.) By Fubini's theorem, if $(\mu_\omega)$ is atomless then $\bar{\mu}^{(2)}(\Delta_X)=0$, and if $(\mu_\omega)$ is $n$-uniform then for all $A \in \mathcal{B}(X)$, $\bar{\mu}^{(2)}(\Delta_A)=\frac{1}{n}\bar{\mu}(A)$.
\end{rmk}

\noindent Now we will say that a probability measure $\mu$ on the product space $(\Omega \times X, \mathcal{F} \otimes \mathcal{B}(X))$ is \emph{$\mathbb{P}$-compatible} if $\pi_{\Omega\ast}\mu=\mathbb{P}$. It is clear that the integrated form of a random probability measure is itself a $\mathbb{P}$-compatible probability measure. The \emph{disintegration theorem} (\cite[Proposition~3.6]{MR1993844}) states that for any $\mathbb{P}$-compatible probability measure $\mu$ there exists a random probability measure $(\mu_\omega)$ whose integrated form coincides with $\mu$, and this random probability measure is unique up to equivalence; we refer to $(\mu_\omega)$ as \emph{a (version of the) disintegration} of $\mu$. We say that a $\mathbb{P}$-compatible probability measure $\mu$ is \emph{past-measurable} if $\mu$ admits a disintegration $(\mu_\omega)$ such that the map $\omega \mapsto \mu_\omega(A)$ is $\mathcal{F}_{-\infty}^0$-measurable for all $A \in \mathcal{B}(X)$. It is easy to show (using the fact that $\mathcal{F}_{-\infty}^0$ and $\mathcal{F}_0^\infty$ are independent $\sigma$-algebras) that for any past-measurable $\mathbb{P}$-compatible probability measure $\mu$, the restriction of $\mu$ to $\mathcal{F}_0^\infty \otimes \mathcal{B}(X)$ coincides with $\mathbb{P}|_{\mathcal{F}_0^\infty} \otimes \pi_{X\ast}\mu$.
\\ \\
We will say that a probability measure $\mu$ on $(\Omega \times X, \mathcal{F} \otimes \mathcal{B}(X))$ is \emph{an invariant measure of $\varphi$} if $\mu$ is both $\mathbb{P}$-compatible and invariant under the semigroup $(\Theta^t)_{t \in \mathbb{T}^+}$. It is not hard to show that a $\mathbb{P}$-compatible probability measure $\mu$ with disintegration $(\mu_\omega)$ is invariant under $\varphi$ if and only if
\begin{equation} \label{inv}
\mathbb{P}( \omega \in \Omega : \mu_{\theta^t\omega} = \varphi(t,\omega)_\ast\mu_\omega) = 1 \hspace{3mm} \forall \, t \in \mathbb{T}^+.
\end{equation}
\noindent We will say that a probability measure $\mu$ on $(\Omega \times X, \mathcal{F} \otimes \mathcal{B}(X))$ is \emph{an ergodic measure of $\varphi$} if $\mu$ is both $\mathbb{P}$-compatible and ergodic with respect to the semigroup $(\Theta^t)_{t \in \mathbb{T}^+}$.
\\ \\
The following is essentially part~(a) of the proof of \cite[Proposition~2]{LeJan87}:

\begin{prop} \label{uniform ergodic}
Let $\mu$ be an ergodic measure of $\varphi$, and let $(\mu_\omega)$ be a disintegration of $\mu$. Then either $(\mu_\omega)$ is atomless or there exists $n \in \mathbb{N}$ such that $(\mu_\omega)$ is $n$-uniform.
\end{prop}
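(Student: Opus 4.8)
The plan is to exploit ergodicity of $\mu$ under $(\Theta^t)$ by looking at the function $\omega \mapsto \max_{x} \mu_\omega(\{x\})$ and showing it is $\mathbb{P}$-a.s.\ constant, then upgrading this constant mass information to the full conclusion. First I would define, for $\mathbb{P}$-almost every $\omega$, the quantity $M(\omega) := \sup_{x \in X} \mu_\omega(\{x\})$, i.e.\ the largest mass of an atom of $\mu_\omega$ (with $M(\omega) = 0$ if $\mu_\omega$ is atomless). The key point is that $M$ is $(\Theta^t)$-invariant: from the invariance relation~\eqref{inv}, for each fixed $t$ we have $\mu_{\theta^t\omega} = \varphi(t,\omega)_\ast \mu_\omega$ for $\mathbb{P}$-a.e.\ $\omega$, and since $\varphi(t,\omega)$ is a (single-valued) continuous map, pushing forward can only merge atoms, never split them, so $M(\theta^t\omega) \geq M(\omega)$; combined with $\int M(\theta^t\omega)\,\mathbb{P}(d\omega) = \int M(\omega)\,\mathbb{P}(d\omega)$ (by $\theta^t_\ast\mathbb{P} = \mathbb{P}$) and finiteness of the integral, this forces $M(\theta^t\omega) = M(\omega)$ for $\mathbb{P}$-a.e.\ $\omega$, for each $t$. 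One has to be slightly careful about measurability of $M$ and about the null sets depending on $t$, but the standard trick of restricting to rational $t$ and using right-continuity handles this; then ergodicity of $\mu$ (hence of $\mathbb{P}$ under $\theta^t$ for $t \neq 0$) gives that $M \equiv c$ is $\mathbb{P}$-a.s.\ constant for some $c \in [0,1]$.

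Next I would show that the \emph{number} of atoms of maximal mass is also a.s.\ constant and finite whenever $c > 0$. If $c > 0$, then $\mu_\omega$ has at most $\lfloor 1/c \rfloor$ atoms of mass $c$; let $N(\omega)$ be the number of such atoms (an integer in $\{1, \ldots, \lfloor 1/c\rfloor\}$). Because $\varphi(t,\omega)_\ast$ can only merge distinct atoms of mass $c$ into atoms of mass $\geq 2c > c$, it cannot increase $N$, so $N(\theta^t\omega) \leq N(\omega)$; again by stationarity of $\mathbb{P}$ and integrability (now of a bounded function) we get $N(\theta^t\omega) = N(\omega)$ a.s., and ergodicity gives $N \equiv m$ for some fixed $m \in \mathbb{N}$. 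Now consider the random probability measure $\nu_\omega := \frac{1}{mc}\sum \{c\,\delta_x : \mu_\omega(\{x\}) = c\}$, the normalised restriction of $\mu_\omega$ to its maximal atoms; this is $m$-uniform by construction. One checks that $(\nu_\omega)$ satisfies the invariance relation~\eqref{inv}: for $\mathbb{P}$-a.e.\ $\omega$, $\varphi(t,\omega)$ maps the set of maximal atoms of $\mu_\omega$ bijectively onto the set of maximal atoms of $\mu_{\theta^t\omega}$ (injectively because merging would create a larger atom, contradicting that $c$ is the max; surjectively because $N$ is constant and $\varphi(t,\omega)_\ast\mu_\omega = \mu_{\theta^t\omega}$). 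Hence $\nu := $ (integrated form of $(\nu_\omega)$) is an invariant measure of $\varphi$.

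Finally I would use ergodicity of the \emph{original} measure $\mu$ to conclude that $\nu = \mu$, which forces $\mu_\omega = \nu_\omega$ a.s.\ and hence $(\mu_\omega)$ is $n$-uniform with $n = m$ (and $c = 1/m$). The cleanest way: $\nu \leq \frac{1}{mc}\mu$ as measures on $\Omega \times X$ (since $\nu_\omega \leq \frac{1}{mc}\mu_\omega$ pointwise), so $\nu$ is absolutely continuous with respect to $\mu$ with a bounded density; the density $\frac{d\nu}{d\mu}$ is then a bounded measurable function on $\Omega \times X$ which is $(\Theta^t)$-invariant (because both $\mu$ and $\nu$ are $\Theta^t$-invariant), hence $\mu$-a.s.\ constant by ergodicity of $\mu$; since both are probability measures the constant is $1$, so $\nu = \mu$. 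Therefore $\mu_\omega = \nu_\omega = \frac1m\sum_{i=1}^m \delta_{x_i(\omega)}$ for $\mathbb{P}$-a.e.\ $\omega$, as claimed. The dichotomy is then: either $c = 0$, in which case $\mu_\omega$ is atomless a.s.; or $c > 0$, in which case the above gives $n$-uniformity with $n = 1/c \in \mathbb{N}$.

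The main obstacle I anticipate is the bookkeeping around null sets and measurability: the invariance relation~\eqref{inv} holds for each fixed $t$ only up to a $\mathbb{P}$-null set, so to get simultaneous invariance of $M$ (and later of $N$ and of the density) one must work along a countable dense set of times and invoke right-continuity of $t \mapsto \varphi(t,\omega)x$ carefully, and one must verify that $M$, $N$, and the set-valued map $\omega \mapsto \{$maximal atoms of $\mu_\omega\}$ are genuinely measurable (e.g.\ by writing $M(\omega) = \sup$ over a countable dense family, or appealing to measurable-selection results for the atoms). The merging-versus-splitting argument itself is conceptually simple once one notes that $\varphi(t,\omega)$ is a function and not a general kernel; everything hinges on that.
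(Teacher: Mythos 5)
Your argument is essentially correct, but it takes a genuinely different and considerably longer route than the paper. The paper's proof works directly on the skew product: it shows that $h(\omega,x):=\mu_\omega(\{x\})$ is jointly measurable and satisfies $h\circ\Theta^t\ge h$ $\mu$-almost surely (the same ``a map can merge but never split atoms'' observation that drives your proof), integrates against the $\Theta^t$-invariant measure $\mu$ to upgrade this to a.s.\ equality, and then applies ergodicity of $\mu$ \emph{once}, to $h$ itself, to conclude $h\equiv c$ $\mu$-a.s.; the dichotomy ($c=0$: atomless; $c=1/n$: $n$-uniform) is then read off immediately. Your route replaces this with: constancy of the maximal atom mass $M$ via ergodicity of $\mathbb{P}$, constancy of the number $N$ of maximal atoms, construction of the auxiliary $m$-uniform invariant measure $\nu$ carried by the maximal atoms, and identification $\nu=\mu$ via the standard fact that an invariant probability measure absolutely continuous with respect to an ergodic invariant probability measure coincides with it. All of these steps can be made rigorous, so your plan works, but it buys nothing extra here: the paper's argument avoids the auxiliary measure, the atom counting and the selection/measurability bookkeeping entirely by exploiting that ergodicity of $\mu$ applies to functions on $\Omega\times X$, not just on $\Omega$.

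A few details need repair. First, your inequality for $N$ points the wrong way: once $M\equiv c$ a.s., two maximal atoms of $\mu_\omega$ cannot merge (that would create an atom of mass at least $2c$), so $\varphi(t,\omega)$ maps them injectively to atoms of $\mu_{\theta^t\omega}$ of mass exactly $c$, giving $N(\theta^t\omega)\ge N(\omega)$; what could in principle happen is the creation of \emph{new} maximal atoms from diffuse or smaller-atom mass, and it is this that the stationarity-plus-integration step rules out. The averaging argument is unaffected, but the stated mechanism (``merging decreases $N$'') is not the right one. Second, ergodicity of $\mathbb{P}$ under an individual $\theta^t$ does not follow from ergodicity of $\mu$ under the semigroup $(\Theta^t)_{t\in\mathbb{T}^+}$ (projecting invariant sets only yields ergodicity of $\mathbb{P}$ with respect to the whole semigroup $(\theta^t)$); it is, however, stated separately in the paper as a consequence of memorylessness, so cite that instead. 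Third, $M$ is not a supremum of $\mu_\omega(\{x\})$ over a countable dense set of points, since $x\mapsto\mu_\omega(\{x\})$ is not continuous; measurability of $M$, $N$ and of $(\nu_\omega)$ is most easily obtained from the joint measurability of $h$, e.g.\ $M(\omega)=\sup\{q\in\mathbb{Q}\cap(0,1] \, : \, \mu_\omega(\{x \in X : h(\omega,x)\ge q\})>0\}$ with the convention $\sup\emptyset=0$. Finally, the $\Theta^t$-invariance of the density $d\nu/d\mu$ for a possibly non-invertible $\Theta^t$ needs a short conditional-expectation (or Birkhoff) argument rather than being immediate, though the conclusion $\nu=\mu$ is indeed a standard fact.
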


\begin{proof}
Define the function $h\col\Omega \times X \to [0,1]$ by $h(\omega,x)=\mu_\omega(\{x\})$. Note that $h$ is measurable, since it can be expressed as
\[ h(\omega,x) \ = \ \int_X \mathbbm{1}_{\Delta_X\!}(x,y) \, \mu_\omega(dy). \]
\noindent Now for each $t \in \mathbb{T}^+$, let $\Omega_t \subset \Omega$ be a $\mathbb{P}$-full set such that for each $\omega \in \Omega_t$, $\mu_{\theta^t\omega} = \varphi(t,\omega)_\ast\mu_\omega$. Then for all $(\omega,x) \in \Omega_t \times X$, we have
\begin{align*}
h(\Theta^t(\omega,x)) \ &= \ \mu_{\theta^t(\omega)}( \, \{\varphi(t,\omega)x\} \, ) \\
&= \ \mu_\omega \left( \, \varphi(t,\omega)^{-1}(\,\{\varphi(t,\omega)x\}\,) \, \right) \\
&\geq \ \mu_\omega(\{x\}) \\ 
&= \ h(\omega,x).
\end{align*}
\noindent Since $\mu$ is $\mathbb{P}$-compatible, $\mu(\Omega_t \times X)=1$ and so $h \circ \Theta^t \overset{\mu\textrm{-a.s.}}{\geq} h\,$ for each $t \in \mathbb{T}^+$. Hence, since $\mu$ is $(\Theta^t)$-ergodic, there exists $c \in [0,1]$ such that $h^{-1}(\{c\})$ is a $\mu$-full set. So for $\mathbb{P}$-almost every $\omega \in \Omega$, $\mu_\omega$ has the property that $\mu_\omega(\{x\})=c$ for $\mu_\omega$-almost all $x \in X$. It is then clear that either $c=0$ and $(\mu_\omega)$ is atomless, or $c=\frac{1}{n}$ for some $n \in \mathbb{N}$ and $(\mu_\omega)$ is $n$-uniform.
\end{proof}

\noindent Now let $\mathcal{S}$ be the set of probability measures on $X$ that are stationary with respect to the Markov transition probabilities $(\varphi_x^t)$. Let $\mathcal{I}$ be the set of past-measurable invariant measures of $\varphi$. The following is \cite[Theorem~4.2.9]{KS12}:\footnote{In \cite{KS12}, it is assumed that $X$ is Polish, allowing in particular for the result of \cite{doi:10.1080/17442500600745359} to be applied in the construction of the random measure $\mu_\omega$. Nonetheless, in the more general case that $X$ is separable and is Borel in the $d$-completion of $X$, one can regard $X$ (topologically) as a measurable subset of the compact space $[0,1]^\mathbb{N}$; one can then construct the random measure $\tilde{\mu}_\omega$ on $[0,1]^\mathbb{N}$ as the almost sure limit of the sequence of random measures $\mu_\omega^{(n)}(\cdot):=\rho(\varphi(n,\omega)^{-1}( \, \cdot \, \cap X))$, and then (since $\mathbb{E}[\tilde{\mu}_\omega]=\rho(\,\cdot\,\cap X)$) one can take $\mu_\omega$ to be the restriction of $\tilde{\mu}_\omega$ to $\mathcal{B}(X)$.}

\begin{prop} \label{ks12}
$\mathcal{I}$ is mapped bijectively into $\mathcal{S}$ by the mapping $\mathfrak{r}\col\mu \mapsto \pi_{X\ast}\mu$. For any $\rho \in \mathcal{S}$, letting $(\mu_\omega)$ be a disintegration of the past-measurable invariant measure $\mathfrak{r}^{-1}(\rho)$, we have that for any unbounded increasing sequence $(t_n)$ in $\mathbb{T}^+$ there exists a $\mathbb{P}$-full set $\tilde{\Omega} \subset \Omega$ such that for all $\omega \in \tilde{\Omega}$, $\varphi(t_n,\theta^{-t_n}\omega)_\ast\rho$ converges weakly to $\mu_\omega$ as $n \to \infty$.
\end{prop}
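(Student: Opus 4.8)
The plan is to deduce everything from one identity: if $\mu$ is a past-measurable invariant measure of $\varphi$ with disintegration $(\mu_\omega)$ and $\pi_{X\ast}\mu=\rho$, then for every $A\in\mathcal{B}(X)$ and every $t\in\mathbb{T}^+$,
\[
\mathbb{E}\bigl[\mu_\omega(A)\,\big|\,\mathcal{F}_{-t}^0\bigr] \;=\; \rho\bigl(\varphi(t,\theta^{-t}\omega)^{-1}(A)\bigr) \;=:\; \mu^{(t)}_\omega(A).
\]
Two preliminary observations about the pull-back random measures $\mu^{(t)}_\omega:=\varphi(t,\theta^{-t}\omega)_\ast\rho$: by the adaptedness of $\varphi$ (property~(a)), $\omega\mapsto\mu^{(t)}_\omega(A)$ is $\mathcal{F}_{-t}^0$-measurable; and by the stationarity of $\rho$ together with $\theta^t_\ast\mathbb{P}=\mathbb{P}$ one has $\mathbb{E}[\mu^{(t)}_\omega(A)]=\rho(A)$ for all $A$ and $t$. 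Also, $\mathfrak{r}$ does map $\mathcal{I}$ into $\mathcal{S}$: for $\mu\in\mathcal{I}$, $(\Theta^t)$-invariance gives $\pi_{X\ast}\mu(A)=\int_\Omega\mu_\omega(\varphi(t,\omega)^{-1}(A))\,\mathbb{P}(d\omega)$, and since $\omega\mapsto\mu_\omega$ is $\mathcal{F}_{-\infty}^0$-measurable while $\omega\mapsto\varphi(t,\omega)^{-1}(A)$ is $\mathcal{F}_0^t$-measurable, $\mathbb{P}$-independence of those two $\sigma$-algebras lets me pull $\mathbb{E}[\mu_\cdot]=\pi_{X\ast}\mu$ out of the integral, which is exactly the stationarity identity.

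For surjectivity, fix $\rho\in\mathcal{S}$. The cocycle property $\varphi(t',\theta^{-t'}\omega)=\varphi(t,\theta^{-t}\omega)\circ\varphi(t'-t,\theta^{-t'}\omega)$, the fact that the inner factor is $\mathcal{F}_{-t'}^{-t}$-measurable and hence --- by memorylessness of the noise --- independent of $\mathcal{F}_{-t}^0$, and the stationarity of $\rho$ together show that $(\mu^{(t)}_\omega)_{t\in\mathbb{T}^+}$ is a bounded martingale of random probability measures relative to the increasing filtration $(\mathcal{F}_{-t}^0)_{t\in\mathbb{T}^+}$ (one conditions on $\mathcal{F}_{-t}^0$ by freezing the outer factor and averaging out the independent inner one). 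By standard martingale-convergence arguments for random measures --- here is where the hypotheses on $X$ enter, via the embedding of $X$ as a Borel subset of $[0,1]^{\mathbb{N}}$ described in the footnote, the limiting random measure being supported on $X$ because its expectation is $\rho$ --- one obtains a random probability measure $(\mu_\omega)$ on $X$, with $\mu^{(n)}_\omega\to\mu_\omega$ weakly and $\mu^{(n)}_\omega(A)\to\mu_\omega(A)$ $\mathbb{P}$-a.s.\ for each $A\in\mathcal{B}(X)$, necessarily $\mathcal{F}_{-\infty}^0$-measurable in $\omega$; let $\mu$ be its integrated form, so $\pi_{X\ast}\mu=\mathbb{E}[\mu_\omega]=\rho$. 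Letting $n\to\infty$ in $\mathbb{E}[\mu^{(n)}_\omega(A)\mid\mathcal{F}_{-t}^0]=\mu^{(t)}_\omega(A)$ (valid for $t\le n$), using $L^1$-convergence of the bounded martingale $(\mu^{(n)}_\omega(A))_n$, yields the displayed formula for this $\mu$. The formula, combined with $\mathcal{F}_{-t_n}^0\uparrow\mathcal{F}_{-\infty}^0$ along any unbounded increasing sequence $(t_n)$ and L\'evy's upward convergence theorem (applied to $\mathbbm{1}_A$, or to a countable convergence-determining family of bounded continuous functions, which exists since $X$ is separable metric), gives $\varphi(t_n,\theta^{-t_n}\omega)_\ast\rho=\mu^{(t_n)}_\omega\to\mu_\omega$ weakly for $\mathbb{P}$-a.e.\ $\omega$ --- the asserted convergence. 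Invariance~(\ref{inv}) of $\mu$ then follows: for $t\in\mathbb{T}^+$, continuity of push-forward under weak convergence and the cocycle identity give $\varphi(t,\omega)_\ast\mu_\omega=\lim_n\varphi(t,\omega)_\ast\mu^{(n)}_\omega=\lim_n\mu^{(n+t)}_{\theta^t\omega}$, and the latter equals $\mu_{\theta^t\omega}$ $\mathbb{P}$-a.s.\ by the convergence just established, since $(n+t)_{n\in\mathbb{N}}$ is cofinal in $\mathbb{T}^+$. Hence $\mu\in\mathcal{I}$ and $\mathfrak{r}(\mu)=\rho$.

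For injectivity, let $\mu'\in\mathcal{I}$ with $\pi_{X\ast}\mu'=\rho$, and let $(\mu'_\omega)$ be a past-measurable disintegration. Applying~(\ref{inv}) at the point $\theta^{-t}\omega$ (and using $\theta^t_\ast\mathbb{P}=\mathbb{P}$) gives $\mu'_\omega=\varphi(t,\theta^{-t}\omega)_\ast\mu'_{\theta^{-t}\omega}$ for $\mathbb{P}$-a.e.\ $\omega$; since $\omega\mapsto\mu'_{\theta^{-t}\omega}$ depends only on the noise before time $-t$, it is $\mathcal{F}_{-\infty}^{-t}$-measurable and hence independent of the $\mathcal{F}_{-t}^0$-measurable map $\varphi(t,\theta^{-t}\omega)$, so conditioning on $\mathcal{F}_{-t}^0$ (freeze $\varphi(t,\theta^{-t}\omega)$, average out $\mu'_{\theta^{-t}\omega}$ against its own law, and use $\mathbb{E}[\mu'_\eta(B)]=\rho(B)$ for fixed $B$) yields the displayed formula again, now for $\mu'$. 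Consequently $\mu'_\omega(A)=\lim_n\mathbb{E}[\mu'_\omega(A)\mid\mathcal{F}_{-t_n}^0]=\lim_n\mu^{(t_n)}_\omega(A)=\mu_\omega(A)$ for $\mathbb{P}$-a.e.\ $\omega$ and every $A$ in a fixed countable generating algebra; so $\mu'_\omega=\mu_\omega$ $\mathbb{P}$-a.s., i.e.\ $\mu'=\mu$.

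The conceptual content --- that the disintegration of $\mathfrak{r}^{-1}(\rho)$ is the almost-sure pull-back limit $\lim_n\varphi(n,\theta^{-n}\omega)_\ast\rho$ of a reverse martingale --- is clean, and the cocycle/independence bookkeeping, once set up carefully, is routine. I expect the main technical obstacle to be the measure-valued martingale convergence when $X$ is only a separable (Borel-in-its-completion) metric space rather than compact: one must justify passing to the compactification $[0,1]^{\mathbb{N}}$, verify that the limiting random measure does not charge the added points (so that it restricts to a genuine probability measure on $X$), and confirm that weak convergence, the existence of a countable convergence-determining family, and the identification of the a.s.\ set-wise limits with the limit measure all transfer back to $X$. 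A secondary source of care is simply ensuring that, in each conditioning step, the relevant ``inner'' objects are measurable with respect to $\mathcal{F}_{-\infty}^{-t}$ (respectively $\mathcal{F}_{-t'}^{-t}$) so that the independence property built into $\mathbb{P}$ can legitimately be invoked.
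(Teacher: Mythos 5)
The paper itself gives no proof of Proposition~\ref{ks12}: it quotes the result from \cite[Theorem~4.2.9]{KS12} and adds only a footnote sketching how to pass from Polish $X$ to the present setting by embedding $X$ as a Borel subset of $[0,1]^{\mathbb{N}}$ and taking almost-sure limits of the pull-back measures. Your proposal reconstructs exactly that proof: the pull-backs $\mu^{(t)}_\omega=\varphi(t,\theta^{-t}\omega)_\ast\rho$ form a measure-valued martingale for the filtration $(\mathcal{F}_{-t}^0)$ via the cocycle identity and memorylessness; the a.s.\ weak limit (taken in the compactification and restricted back to $X$, legitimate because its mean is $\rho$, and weak convergence does transfer to the subspace since all measures including the limit are carried by $X$) is the disintegration of the past-measurable invariant measure; invariance follows from weak continuity of push-forwards plus the cocycle identity; and uniqueness together with convergence along an arbitrary unbounded increasing $(t_n)$ follows from the conditional-expectation identity and L\'evy's upward theorem applied to a countable convergence-determining family. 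This is the same route as the cited source and matches the paper's footnote. One step is stated more strongly than your construction delivers: the claim that $\mu^{(n)}_\omega(A)\to\mu_\omega(A)$ a.s.\ and in $L^1$ for \emph{every} Borel $A$, which you then use to ``let $n\to\infty$'' and obtain the displayed identity for the constructed $\mu$. Weak convergence identifies the limit of the martingale $\int f\,d\mu^{(n)}_\omega$ only for bounded continuous $f$; for a general Borel $A$ the a.s.\ limit of the bounded martingale $\mu^{(n)}_\omega(A)$ is not a priori equal to $\mu_\omega(A)$, so as written there is a small circularity. It is repairable entirely within your own toolkit: first establish invariance of the constructed $\mu$ (which you do without using the identity), then derive the displayed identity for $\mu$ by the same freeze-and-average conditioning you carry out in the injectivity paragraph (which works for any past-measurable invariant measure with marginal $\rho$ and for arbitrary Borel $A$), or alternatively extend the identity from bounded continuous $f$ to indicators by a functional monotone class argument; with that reordering the argument is complete and agrees with the standard proof.
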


\noindent In addition, we have the following:

\begin{prop} \label{unique invariant}
For any $\rho \in \mathcal{S}$, letting $\mu_\rho$ denote the unique past-measurable invariant measure of $\varphi$ satisfying $\pi_{X\ast}\mu_\rho=\rho$, $\mu_\rho$ is also the only $(\Theta^t)$-invariant probability measure on $(\Omega \times X,\mathcal{F} \otimes \mathcal{B}(X))$ whose restriction to $\mathcal{F}_0^\infty \otimes \mathcal{B}(X)$ coincides with $\mathbb{P}|_{\mathcal{F}_0^\infty} \otimes \rho$.
\end{prop}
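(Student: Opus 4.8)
\noindent The plan is to show that \emph{every} $(\Theta^t)$-invariant probability measure $\nu$ on $(\Omega\times X,\mathcal{F}\otimes\mathcal{B}(X))$ whose restriction to $\mathcal{F}_0^\infty\otimes\mathcal{B}(X)$ equals $\mathbb{P}|_{\mathcal{F}_0^\infty}\otimes\rho$ must coincide with $\mu_\rho$; since $\mu_\rho$ is an invariant measure of $\varphi$ (hence $(\Theta^t)$-invariant) and is past-measurable and $\mathbb{P}$-compatible with $\pi_{X\ast}\mu_\rho=\rho$, its restriction to $\mathcal{F}_0^\infty\otimes\mathcal{B}(X)$ is $\mathbb{P}|_{\mathcal{F}_0^\infty}\otimes\rho$ by the fact recalled above, so this simultaneously gives existence and uniqueness. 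So fix such a $\nu$. First I would note that $\pi_{X\ast}\nu=\rho$ (take the $\Omega$-factor equal to $\Omega$) and that $\nu$ is $\mathbb{P}$-compatible: $\pi_{\Omega\ast}\nu$ is $(\theta^t)$-invariant for each $t\in\mathbb{T}^+$ (because $\theta^t\circ\pi_\Omega=\pi_\Omega\circ\Theta^t$ and $\Theta^t_\ast\nu=\nu$) and agrees with $\mathbb{P}$ on $\mathcal{F}_0^\infty$, so for $A\in\mathcal{F}_{-t}^\infty$, using $(\theta^t)^{-1}\mathcal{F}_{-t}^\infty=\mathcal{F}_0^\infty$ and the $(\theta^t)$-invariance of both $\pi_{\Omega\ast}\nu$ and $\mathbb{P}$, one gets $\pi_{\Omega\ast}\nu(A)=\pi_{\Omega\ast}\nu((\theta^t)^{-1}A)=\mathbb{P}((\theta^t)^{-1}A)=\mathbb{P}(A)$; since $\bigcup_{t\in\mathbb{T}^+}\mathcal{F}_{-t}^\infty$ is an algebra generating $\mathcal{F}$, a $\pi$--$\lambda$ argument gives $\pi_{\Omega\ast}\nu=\mathbb{P}$. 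Thus $\nu$ admits a disintegration $(\nu_\omega)$, and it suffices to prove $\nu_\omega=\mu_\omega$ for $\mathbb{P}$-a.e.\ $\omega$, where $(\mu_\omega)$ is a disintegration of $\mu_\rho$.

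The heart of the argument is the identity
\[ \mathbb{E}_\nu\big[\,f\circ\pi_X \,\big|\, \pi_\Omega^{-1}\mathcal{F}_{-t}^\infty\,\big](\omega) \;=\; \big(\varphi(t,\theta^{-t}\omega)_\ast\rho\big)(f) \qquad\text{for $\mathbb{P}$-a.e.\ }\omega, \]
valid for every bounded measurable $f\col X\to\mathbb{R}$ and every $t\in\mathbb{T}^+$. To prove it I would test against an arbitrary bounded $\mathcal{F}_{-t}^\infty$-measurable $g\col\Omega\to\mathbb{R}$: by $(\Theta^t)$-invariance, $\int(f\circ\pi_X)(g\circ\pi_\Omega)\,d\nu=\int f(\varphi(t,\omega)x)\,g(\theta^t\omega)\,\nu(d\omega,dx)$. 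The point is that the integrand on the right is $(\mathcal{F}_0^\infty\otimes\mathcal{B}(X))$-measurable---since $(\omega,x)\mapsto\varphi(t,\omega)x$ is $(\mathcal{F}_0^t\otimes\mathcal{B}(X))$-measurable and $g\circ\theta^t$ is $\mathcal{F}_0^\infty$-measurable, using $(\theta^t)^{-1}\mathcal{F}_{-t}^\infty=\mathcal{F}_0^\infty$---so on the right-hand side $\nu$ may be replaced by $\mathbb{P}|_{\mathcal{F}_0^\infty}\otimes\rho$; integrating out the $x$-variable against $\rho$ and then applying the $\mathbb{P}$-preserving substitution $\omega=\theta^{-t}\omega'$ turns the expression into $\int_\Omega g(\omega')\,\big(\varphi(t,\theta^{-t}\omega')_\ast\rho\big)(f)\,\mathbb{P}(d\omega')$. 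Since $\omega'\mapsto\big(\varphi(t,\theta^{-t}\omega')_\ast\rho\big)(f)$ is $\mathcal{F}_{-t}^0$-measurable, hence $\mathcal{F}_{-t}^\infty$-measurable, the displayed identity follows.

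To conclude, fix an unbounded increasing sequence $(t_n)$ in $\mathbb{T}^+$. The $\sigma$-algebras $\pi_\Omega^{-1}\mathcal{F}_{-t_n}^\infty$ increase to $\pi_\Omega^{-1}\mathcal{F}$ (again because $\sigma\big(\bigcup_n\mathcal{F}_{-t_n}^\infty\big)=\mathcal{F}$), so the martingale convergence theorem (Lévy's upward theorem), together with the identity above, gives $\big(\varphi(t_n,\theta^{-t_n}\omega)_\ast\rho\big)(f)\to\nu_\omega(f)$ for $\mathbb{P}$-a.e.\ $\omega$, for every bounded measurable $f$. On the other hand, Proposition~\ref{ks12} applied to the past-measurable invariant measure $\mu_\rho=\mathfrak{r}^{-1}(\rho)$ gives $\varphi(t_n,\theta^{-t_n}\omega)_\ast\rho\to\mu_\omega$ weakly for $\mathbb{P}$-a.e.\ $\omega$, hence $\big(\varphi(t_n,\theta^{-t_n}\omega)_\ast\rho\big)(f)\to\mu_\omega(f)$ for every bounded continuous $f$. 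Comparing the two limits, $\nu_\omega(f)=\mu_\omega(f)$ for $\mathbb{P}$-a.e.\ $\omega$, for each bounded continuous $f$; evaluating along a countable family of bounded continuous functions that determines Borel probability measures on $X$ (such a family exists since $X$ is separable metric), we obtain $\nu_\omega=\mu_\omega$ for $\mathbb{P}$-a.e.\ $\omega$, i.e.\ $\nu=\mu_\rho$.

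The step I expect to be most delicate is the identity in the second paragraph: one must carefully track how the shift $\theta^t$ and the cocycle $\varphi(t,\cdot)$ relocate the ``past'' and ``future'' sub-$\sigma$-algebras (in particular the identities $(\theta^t)^{-1}\mathcal{F}_{-t}^\infty=\mathcal{F}_0^\infty$ and $\mathcal{F}_0^t\subset\mathcal{F}_0^\infty$), and verify that the integrand obtained after invoking $(\Theta^t)$-invariance really is $(\mathcal{F}_0^\infty\otimes\mathcal{B}(X))$-measurable---for that is precisely what licenses replacing $\nu$ by a product measure there. The remaining ingredients---the $\mathbb{P}$-compatibility of $\nu$, the identity $\sigma\big(\bigcup_t\mathcal{F}_{-t}^\infty\big)=\mathcal{F}$, the change of variables, and the martingale convergence---are routine.
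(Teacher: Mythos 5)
Your proposal is correct, but it takes a noticeably heavier route than the paper. The paper's proof is a three-line set-level argument: for $A \in \mathcal{F}_{-t}^\infty \otimes \mathcal{B}(X)$ the preimage $(\Theta^t)^{-1}(A)$ lies in $\mathcal{F}_0^\infty \otimes \mathcal{B}(X)$, so $(\Theta^t)$-invariance of $\mu'$ and of $\mu_\rho$, together with the fact that both restrict to $\mathbb{P}|_{\mathcal{F}_0^\infty} \otimes \rho$ there, gives $\mu'(A)=\mu_\rho(A)$; since $\bigcup_{t}\mathcal{F}_{-t}^\infty \otimes \mathcal{B}(X)$ generates $\mathcal{F} \otimes \mathcal{B}(X)$, the two measures coincide. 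Your argument contains exactly this mechanism inside the ``key identity'' of your second paragraph — after invoking $(\Theta^t)$-invariance you observe that the integrand is $(\mathcal{F}_0^\infty \otimes \mathcal{B}(X))$-measurable and replace $\nu$ by the product measure, obtaining an expression for $\int (f\circ\pi_X)(g\circ\pi_\Omega)\,d\nu$ that does not depend on $\nu$ at all. At that point you could have stopped: the same identity holds for $\mu_\rho$ (which satisfies the same hypotheses), so $\nu$ and $\mu_\rho$ agree on the generating $\pi$-system $\bigcup_t \mathcal{F}_{-t}^\infty \otimes \mathcal{B}(X)$ and hence everywhere, with no need for the $\mathbb{P}$-compatibility step, the disintegration theorem, Lévy's upward theorem, Proposition~\ref{ks12}, or a countable measure-determining family of continuous functions. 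What your longer route buys is an explicit identification of the disintegration of $\nu$ as the almost-sure pullback limit $\varphi(t_n,\theta^{-t_n}\omega)_\ast\rho \to \mu_\omega$, i.e.\ you re-derive the conclusion of Proposition~\ref{ks12} for $\nu$ itself; but as a proof of uniqueness it makes the result depend on that proposition (and on the disintegration machinery), which the paper's argument deliberately avoids. All the individual steps you give (the measurability bookkeeping with $(\theta^t)^{-1}\mathcal{F}_{-t}^\infty=\mathcal{F}_0^\infty$, the change of variables, the martingale limit, the countable determining family on a separable metric space) check out, so this is a correct but unnecessarily long proof.
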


\begin{proof}
Fix $\rho \in \mathcal{S}$. Let $\mu'$ be any $(\Theta^t)$-invariant probability measure with the property that $\mu'|_{\mathcal{F}_0^\infty \otimes \mathcal{B}(X)}=\mathbb{P}|_{\mathcal{F}_0^\infty} \otimes \rho$. Note that for each $t \in \mathbb{T}^+$, $\Theta^t$ is $(\mathcal{F}_0^\infty \otimes \mathcal{B}(X),\mathcal{F}_{-t}^\infty \otimes \mathcal{B}(X))$-measurable; so then, for any $t \in \mathbb{T}^+$, for all $A \in \mathcal{F}_{-t}^\infty \otimes \mathcal{B}(X)$,
\[ \mu'(A) \ = \ \mu'(\Theta^{-t}(A)) \ = \ \mathbb{P} \otimes \rho(\Theta^{-t}(A)) \ = \ \mu_\rho(\Theta^{-t}(A)) \ = \ \mu_\rho(A). \]
\noindent Since $\mu'$ and $\mu_\rho$ agree on $\mathcal{F}_{-t}^\infty \otimes \mathcal{B}(X)$ for all $t \in \mathbb{T}^+$ and (by assumption) $\mathcal{F}$ is the $\sigma$-algebra generated by $\,\bigcup_{t \in \mathbb{T}^+} \mathcal{F}_{-t}^\infty$, it follows that $\mu'$ and $\mu_\rho$ agree on the whole of $\mathcal{F} \otimes \mathcal{B}(X)$.
\end{proof}

\noindent As an immediate consequence of Propositions~\ref{ks12} and \ref{unique invariant}, we have:

\begin{cor} \label{ergodic correspondence}
For any $\mu \in \mathcal{I}$, $\mu$ is an ergodic measure of $\varphi$ if and only if $\pi_{X\ast}\mu$ is ergodic with respect to $(\varphi_x^t)$.
\end{cor}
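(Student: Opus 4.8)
The plan is to reduce everything to the ``restricted'' probability space $(\Omega \times X, \mathcal{F}_0^\infty \otimes \mathcal{B}(X), \mathbb{P}|_{\mathcal{F}_0^\infty} \otimes \rho)$, where $\rho := \pi_{X\ast}\mu$. Three facts already at hand do the work: (1) by Proposition~\ref{ks12} we have $\rho \in \mathcal{S}$ and $\mu$ is the unique past-measurable invariant measure $\mu_\rho$ projecting to $\rho$; (2) since $\mu$ is past-measurable, $\mu|_{\mathcal{F}_0^\infty \otimes \mathcal{B}(X)} = \mathbb{P}|_{\mathcal{F}_0^\infty} \otimes \rho$; (3) $\rho$ is ergodic with respect to $(\varphi_x^t)$ exactly when $(\Theta^t)_{t \in \mathbb{T}^+}$ is ergodic on the restricted space. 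I would also record at the outset that $\mathcal{F}_0^\infty \supseteq \mathcal{F}_t^\infty$ for $t \geq 0$, combined with the $(\mathcal{F}_0^\infty \otimes \mathcal{B}(X), \mathcal{F}_{-t}^\infty \otimes \mathcal{B}(X))$-measurability of $\Theta^t$, yields $(\Theta^t)^{-1}(\mathcal{F}_0^\infty \otimes \mathcal{B}(X)) \subseteq \mathcal{F}_0^\infty \otimes \mathcal{B}(X)$; hence $(\Theta^t)$-invariance of a set in $\mathcal{F}_0^\infty \otimes \mathcal{B}(X)$ is unambiguous, and such a set is also $(\Theta^t)$-invariant when viewed in $\mathcal{F} \otimes \mathcal{B}(X)$.

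For the forward implication, suppose $\mu$ is an ergodic measure of $\varphi$. If $A \in \mathcal{F}_0^\infty \otimes \mathcal{B}(X)$ is $(\Theta^t)$-invariant modulo $\mathbb{P}|_{\mathcal{F}_0^\infty} \otimes \rho$, then by (2) it is $(\Theta^t)$-invariant modulo $\mu$ in $\mathcal{F} \otimes \mathcal{B}(X)$, so $\mu$-ergodicity forces $\mu(A) \in \{0,1\}$, whence $(\mathbb{P}|_{\mathcal{F}_0^\infty} \otimes \rho)(A) = \mu(A) \in \{0,1\}$. Thus $(\Theta^t)$ is ergodic on the restricted space, so by (3) $\rho = \pi_{X\ast}\mu$ is ergodic with respect to $(\varphi_x^t)$.

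For the converse, suppose $\rho$ is ergodic and, for contradiction, that $\mu$ is not ergodic, so there is a $(\Theta^t)$-invariant $A \in \mathcal{F} \otimes \mathcal{B}(X)$ with $0 < \mu(A) < 1$. Then $\nu := \mu(\,\cdot \mid A)$ and $\nu' := \mu(\,\cdot \mid A^c)$ are $(\Theta^t)$-invariant probability measures on $(\Omega \times X, \mathcal{F} \otimes \mathcal{B}(X))$ with $\mu = \mu(A)\,\nu + \mu(A^c)\,\nu'$ and $\nu \perp \nu'$. Restricting to $\mathcal{F}_0^\infty \otimes \mathcal{B}(X)$ and using (2), $\mathbb{P}|_{\mathcal{F}_0^\infty} \otimes \rho$ is a convex combination of the $(\Theta^t)$-invariant probability measures $\nu|_{\mathcal{F}_0^\infty \otimes \mathcal{B}(X)}$ and $\nu'|_{\mathcal{F}_0^\infty \otimes \mathcal{B}(X)}$ on the restricted space; since by (3) $\mathbb{P}|_{\mathcal{F}_0^\infty} \otimes \rho$ is ergodic, hence extremal among $(\Theta^t)$-invariant probability measures there, both restrictions equal $\mathbb{P}|_{\mathcal{F}_0^\infty} \otimes \rho$. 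Then $\nu$ and $\nu'$ are each $(\Theta^t)$-invariant probability measures on the full space whose restriction to $\mathcal{F}_0^\infty \otimes \mathcal{B}(X)$ is $\mathbb{P}|_{\mathcal{F}_0^\infty} \otimes \rho$, so Proposition~\ref{unique invariant} forces $\nu = \mu_\rho = \nu'$, contradicting $\nu \perp \nu'$.

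I expect the only genuine subtlety to lie in the converse, at the step from ``$\nu$ and $\nu'$ agree on $\mathcal{F}_0^\infty \otimes \mathcal{B}(X)$'' to ``$\nu = \nu'$'': this is exactly where the full strength of Proposition~\ref{unique invariant} is needed, since the conditioned measures $\nu, \nu'$ need not be past-measurable, and one is using that $\mu_\rho$ is singled out by its restriction to the ``future'' $\sigma$-algebra among \emph{all} $(\Theta^t)$-invariant probability measures. The remaining ingredients---the measurability bookkeeping for $\mathcal{F}_0^\infty \otimes \mathcal{B}(X)$ and the extremality of ergodic measures among invariant measures---are routine.
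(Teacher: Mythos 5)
Your proof is correct and follows essentially the route the paper intends: the paper states the corollary without proof as an immediate consequence of Propositions~\ref{ks12} and \ref{unique invariant}, and your argument---past-measurability giving $\mu|_{\mathcal{F}_0^\infty\otimes\mathcal{B}(X)}=\mathbb{P}|_{\mathcal{F}_0^\infty}\otimes\rho$, the stated equivalence between ergodicity of $\rho$ and ergodicity of $(\Theta^t)$ on the restricted space, extremality of ergodic measures, and Proposition~\ref{unique invariant} to upgrade agreement on the future $\sigma$-algebra to full equality of the conditioned measures---is exactly that consequence spelled out. No gaps.
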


\begin{rmk}
The above one-to-one correspondence between past-measurable invariant measures and stationary probability measures is a particular case of a more general one-to-one correspondence between invariant measures and ``forward-time invariant measures'', as described in \cite[Theorem~1.7.2]{Arn98}.
\end{rmk}

\noindent Now we define the \emph{two-point motion} $\varphi \times \varphi \! = \! \left(\varphi \! \times \! \varphi(t,\omega)\right)_{t \in \mathbb{T}^+ \! , \, \omega \in \Omega}\,$ to be the $(\mathbb{T}^+ \! \times \Omega)$-indexed family of functions $\varphi \! \times \! \varphi(t,\omega):X \times X \to X \times X$ given by
\[ \varphi \! \times \! \varphi(t,\omega)(x,y) \ = \ (\varphi(t,\omega)x,\varphi(t,\omega)y). \]
\noindent Note that $\varphi \times \varphi$ is itself a random dynamical system on $X \times X$. We may define the associated family of Markov transition probabilities $(\varphi_{(x,y)}^t)_{x,y \in X \! , \, t \in \mathbb{T}^+}$ by
\[ \varphi_{(x,y)}^t(A) \ = \ \mathbb{P}(\omega : (\varphi(t,\omega)x,\varphi(t,\omega)y) \in A) \hspace{3mm} \forall \, A \in \mathcal{B}(X \times X). \]

\begin{rmk} \label{random prod}
For any invariant measure $\mu$ of $\varphi$, letting $(\mu_\omega)$ be a disintegration of $\mu$, the integrated form of $(\mu_\omega \otimes \mu_\omega)_{\omega \in \Omega}$ is an invariant measure of $\varphi \times \varphi$; so if the invariant measure $\mu$ is past-measurable then (as in \cite[Proposition~2.6(ii)]{Bax91}) the measure $\bar{\mu}^{(2)}$ as defined in Remark~\ref{diagonal measure} is a stationary probability measure of $(\varphi_{(x,y)}^t)$.
\end{rmk}

\begin{sa}
From now on, fix a $(\varphi_x^t)$-ergodic probability measure $\rho$, let $\mu$ be the unique past-measurable ergodic measure of $\varphi$ satisfying $\pi_{X\ast}\mu=\rho$ (with $(\mu_\omega)$ being a disintegration of $\mu$), and let $\bar{\mu}^{(2)}$ be the associated $(\varphi_{(x,y)}^t)$-stationary probability measure on $X \times X$ as described in Remark~\ref{random prod}. For each $\omega \in \Omega$, we define the equivalence relation $\sim_\omega$ on $X$ by
\[ x \sim_\omega y \hspace{4mm} \Longleftrightarrow \hspace{4mm} d(\,\varphi(t,\omega)x \, , \, \varphi(t,\omega)y \,) \, \to \, 0 \, \textrm{ as } t \to \infty. \]
\end{sa}

\noindent Let us now outline the proof of Proposition~\ref{le jan}: It is not hard to show that if $\varphi$ is stable with respect to $\rho$, then $\bar{\mu}^{(2)}(\Delta_X)>0$ and therefore $(\mu_\omega)$ is not atomless;\footnote{cf.~part~(b) of the proof of \cite[Proposition~2]{LeJan87}, or \cite[Lemma~2.17(2)]{FGS14}.} so there exists $n_\rho \in \mathbb{N}$ such that $(\mu_\omega)$ is $n_\rho$-uniform. So $A(\omega):=\mathrm{supp}\,\mu_\omega$ is almost surely an $n_\rho$-element set. Due to (\ref{inv}) and the $(\theta^t)$-invariance of $\mathbb{P}$, we have that for $\mathbb{P}$-almost all $\omega$ the elements of $A(\omega)$ belong to distinct equivalence classes of $\sim_\omega$. Since $\varphi$ is stable with respect to $\rho$, we have that for $\mathbb{P}$-almost all $\omega$, for each $x \in A(\omega)$, the $\,\sim_\omega$-equivalence class of $x$ contains a neighbourhood of $x$. Consequently, as in \cite[Proposition~3]{LeJan87}, one can use the construction of $\mathfrak{r}^{-1}$ in Proposition~\ref{ks12} together with the $(\theta^t)$-invariance of $\mathbb{P}$ to deduce that $\mathbb{P}$-almost all $\omega$, for each $x \in A(\omega)$, the $\,\sim_\omega$-equivalence class of $x$ contains an open set of measure $\frac{1}{n_\rho}$ under $\rho$. (By the second-countability of $X$ and the fact that $\varphi$ is stable with respect to $\rho$, this open set is $\sigma$-contracting under $\omega$.)
\\ \\
We now prove our main result:

\begin{proof}[Proof of Theorem~\ref{main}]
Suppose (i) holds; then since $A_\rho$ is a $\rho$-full set, $A_\rho \cap R \neq \emptyset$, and so there exists $p \in A_\rho$ such that $\mathfrak{C}_p$ contains a $\rho$-full-length rectangle, implying (ii).
\\ \\
Now suppose that (ii) holds. For each $t \in \mathbb{T}^+$, define the map $\Theta_{[2]}^t\col \Omega \times X \times X \to \Omega \times X \times X$ by
\[ \Theta_{[2]}^t(\omega,x,y) \ := \ (\theta^t\omega,\varphi(t,\omega)x,\varphi(t,\omega)y). \]
\noindent Note that the probability measure $\mathbb{P}|_{\mathcal{F}_0^\infty} \otimes \bar{\mu}^{(2)}$ on $(\Omega \times X \times X, \mathcal{F}_0^\infty \otimes \mathcal{B}(X \times X))$ is invariant under the semigroup $(\Theta_{[2]}^t)_{t \in \mathbb{T}^+}$. So by the Poincar\'{e} recurrence theorem,
\[ \mathbb{P} \otimes \! \bar{\mu}^{(2)}( (\omega,x,y) : x \neq y, \, x \sim_\omega y ) \ = \ 0. \]
\noindent Hence, by Fubini's theorem, the set
\[ Y \ := \ \{ \, (x,y) \in (X \times X) \setminus \Delta_X \, : \, \mathbb{P}(\omega: x \sim_\omega y) > 0 \, \} \]
\noindent is an $\bar{\mu}^{(2)}$-null set. Now let $A_1,A_2 \in \mathcal{B}(X)$ be such that $\rho(A_1)>0$, $\rho(A_2)=1$ and $A_1 \times A_2 \subset \mathfrak{C}_{A_\rho}$. We will show that for any $(x,y) \in A_1 \times A_2$, $\mathbb{P}(\omega:x \sim_\omega y)>0$. Fix any $(x,y) \in A_1 \times A_2$, and let $p \in A_\rho$ be such that $(x,y)$ is contractible towards $p$. Let $U,V \subset X$ be open sets with $\bar{U} \subset V$, $\rho(U)>0$ and $\mathbb{P}(E_V)>0$, where $E_V:=\{\omega : \textrm{$V$ contracts under $\omega$}\}$. Let $t_1 \in \mathbb{T}^+$ be such that $\varphi_p^{t_1}(U)>0$. Since $\varphi(t_1,\omega)$ is continuous for all $\omega$, let $r>0$ be such that
\[ k_1:=\mathbb{P}(\omega:\varphi(t_1,\omega)\overline{B_r(p)} \subset \bar{U}) \ > \ 0 \]
and let $t_0 \in \mathbb{T}^+$ be such that
\[ k_0:=\mathbb{P}( \, \omega \, : \, \varphi(t_0,\omega)x,\varphi(t_0,\omega)y \in B_r(p) \, ) \ > \ 0. \]
\noindent Then we have that
\begin{align*}
\mathbb{P}&(\omega : x \sim_\omega y) \\
&\geq \ \mathbb{P}( \, \omega \, : \, \varphi(t_0,\omega)x,\varphi(t_0,\omega)y \in B_r(p) \,\textrm{ and }\, \varphi(t_1,\theta^{t_0}\omega)\overline{B_r(p)} \subset \bar{U} \textrm{ and }\, \theta^{t_0+t_1} \in E_V \, ) \\
&= \ k_0k_1\mathbb{P}(E_V) \\
&> \ 0
\end{align*}
\noindent as required. So in particular, $(A_1 \times A_2) \setminus \Delta_X \, \subset \, Y$. Now since $1=\rho(A_2)=\int_\Omega \mu_\omega(A_2) \, \mathbb{P}(d\omega)$, we have that $\mu_\omega(A_2)=1$ for $\mathbb{P}$-almost all $\omega \in \Omega$, and therefore
\[ \bar{\mu}^{(2)}(A_1 \times A_2) \ = \ \int_\Omega \mu_\omega(A_1)\mu_\omega(A_2) \, \mathbb{P}(d\omega) \ = \ \int_\Omega \mu_\omega(A_1) \, \mathbb{P}(d\omega) \ = \ \rho(A_1). \]
\noindent Let $n \in \mathbb{N}$ be such that $(\mu_\omega)$ is $n$-uniform. By Remark~\ref{diagonal measure}, we have that
\[ \bar{\mu}^{(2)}((A_1 \times A_2) \cap \Delta_X) \ = \ \bar{\mu}^{(2)}(\Delta_{A_1 \cap A_2}) \ = \ \tfrac{1}{n}\rho(A_1), \]
\noindent and therefore
\[ \bar{\mu}^{(2)}\left( (A_1 \times A_2) \setminus \Delta_X \right) \ = \ \tfrac{n-1}{n}\rho(A_1). \]
\noindent But since $(A_1 \times A_2) \setminus \Delta_X \, \subset \, Y$, we also have that
\[ \hspace{-14.6mm} \bar{\mu}^{(2)}\left( (A_1 \times A_2) \setminus \Delta_X \right) \ = \ 0. \]
\noindent Since $\rho(A_1) \neq 0$, it obviously follows that $n=1$, i.e.~(iii) holds.
\\ \\
Now suppose that (iii) holds; we show that (iv) holds. As in Remark~\ref{pairwise sync}, let $A \subset X$ be a $\rho$-full set such that for all $x,y \in A$, $\mathbb{P}(\omega : x \sim_\omega y)=1$; without loss of generality, take $A$ to be a subset of $\mathrm{supp}\,\rho$ such that for each $x \in A$, $\mathbb{P}$-almost every $\omega \in \Omega$ has the property that for all $T \in \mathbb{T}^+$, $\{\varphi(t,\omega)x:t \geq T\}$ is dense in $\mathrm{supp}\,\rho$. Fix any $x,y \in A$, and let $\omega$ be any sample point with the properties that $x \sim_\omega y$ and for all $T \in \mathbb{T}^+$, $\{\varphi(t,\omega)x:t \geq T\}$ is dense in $\mathrm{supp}\,\rho$. Fix any open $U \subset X$ with $\rho(U)>0$, and let $p \in U$ and $\varepsilon>0$ be such that $B_\varepsilon(p) \subset U$. Let $T \in \mathbb{T}^+$ be such that for all $t \geq T$, $d(\varphi(t,\omega)x,\varphi(t,\omega)y)<\frac{\varepsilon}{2}$; and let $t' \geq T$ be such that $\varphi(t',\omega)x \in B_{\frac{\varepsilon}{2}}(p)$. Then both $\varphi(t',\omega)x$ and $\varphi(t',\omega)y$ are in $B_\varepsilon(p)$ and hence in $U$.
\\ \\
Finally, it is clear that (iv)$\Rightarrow$(i) (with $R=\mathrm{supp}\,\rho$).
\end{proof}

\sloppy
\printbibliography[maxnames=99]

\end{document}